\newcommand{\R}{{\mathbb R}}
\renewcommand{\P}{\mathbb P}
\newcommand{\Z}{\mathbb Z}
\newcommand{\dd}{\mathrm d}
\newcommand{\dtm}{\frac{\Delta t}{2}}
\newcommand{\dt}{{\Delta t}}
\newtheorem{Theorem}{Theorem}
\newtheorem{Prop}[Theorem]{Proposition}
\begin{document}

\title{Second order \\ fully semi-Lagrangian discretizations\\
 of advection--diffusion--reaction systems}

\author{Luca Bonaventura$^{(1)}$\ \ Elisabetta Carlini$^{(2)}$\\
  Elisa Calzola$^{(2)}$ Roberto Ferretti$^{(3)}$}

\maketitle

\begin{center}
{\small
$^{(1)}$ MOX -- Modelling and Scientific Computing, \\
Dipartimento di Matematica, Politecnico di Milano \\
Via Bonardi 9, 20133 Milano, Italy\\
{\tt luca.bonaventura@polimi.it}
}
\end{center}
\begin{center}
{\small
$^{(2)}$
Dipartimento di Matematica\\
Universit\`a degli Studi Roma Sapienza\\
 P.le Aldo Moro 5,
00185, Roma, Italy\\
 {\tt calzola@mat.uniroma1.it}, {\tt carlini@mat.uniroma1.it}
}
\end{center}

\begin{center}
{\small
$^{(3)}$
Dipartimento di Matematica e Fisica\\
Universit\`a degli Studi Roma Tre\\
 L.go S. Leonardo Murialdo 1,
00146, Roma, Italy\\
{\tt ferretti@mat.uniroma3.it}
}
\end{center}

\date{}

\noindent
{\bf Keywords}:  Semi-Lagrangian methods, advection--diffusion--reaction systems, implicit methods,

\vspace*{0.5cm}

\noindent
{\bf AMS Subject Classification}: 35L10, 65M06, 65M25, 65M12

\vspace*{0.5cm}

\pagebreak

\abstract{We propose a second order, fully semi-Lagrangian method for the numerical solution of systems of 
advection--diffusion--reaction equations, which employs a  semi-Lagrangian approach to approximate in time both the advective and  the diffusive terms. Standard interpolation procedures are used   
for the space discretization on structured and unstructured meshes.
The  proposed method  allows for large time steps, while avoiding the solution of large linear systems, which would be required by an implicit time discretization technique. Numerical experiments demonstrate the effectiveness of the proposed approach and its superior efficiency with respect to more conventional explicit and implicit time discretizations.}

\pagebreak
\section{Introduction}
\label{intro}

Large systems of advection--diffusion--reaction equations are responsible for most of the computational cost of typical environmental fluid dynamics models, such as  are applied in climate modelling, water and air quality models and oceanic biogeochemistry \cite{bonaventura:2012}, \cite{erath:2012}, \cite{erath:2016}. Also in short and medium range weather forecasting, the number of interacting transported   species is significant and drives the choice of optimal time discretization approaches towards methods that allow  the use of large time steps \cite{wedi:2015}.
Due to the potentially very large number of equations of this kind that have to be solved simultaneously in order  to achieve a complete description of the relevant physical processes, even minor efficiency gains
in  the  discretization of this very classical problem are of paramount practical importance. 
The standard ways to achieve such optimal efficiency are either the use of implicit schemes, or the
application of semi-Lagrangian (SL) techniques, \cite{falcone:2013}, \cite{staniforth:1991} to the advection step, coupled to implicit methods for the diffusion and reaction step. As discussed in  \cite{erath:2012}, \cite{erath:2016}, SL methods have the advantage that all the computational work that makes them computationally more expensive per time step
than standard Eulerian techniques is indeed independent of the number of tracers, which allows to
achieve easily a superior efficiency level in the limit of a large number of tracers.

In  the recent papers \cite{bonaventura:2014}, \cite{bonaventura:2018}, a fully SL approach to both
the advection and diffusion step was pursued, which combines the standard SL treatment of advection
with SL-like schemes for diffusion, proposed, among others, in  \cite{camilli:1995},   \cite{ferretti:2010}, \cite{milstein:2002}, \cite{milstein:2000}, \cite{milstein:2001}. In particular, it was shown in  \cite{bonaventura:2018}
that, even for a single advection--diffusion equation, the fully SL approach can be more efficient than
standard implicit techniques. 

In the present work, we present a number of improvements to the fully
SL  approach of \cite{bonaventura:2014}, \cite{bonaventura:2018}. In particular, we show how second order accuracy in time can be achieved. An improved  treatment of Dirichlet boundary conditions is also discussed and analysed.
The resulting approach yields  a very efficient combination, which is 
 validated on a number of classical benchmarks, both on structured and unstructured meshes.
 Numerical results  show that the method yields good quantitative agreement with reference
numerical solutions, while being superior in efficiency to standard implicit approaches and to approaches
in which the SL method is only used for the advection term. 


The outline of the paper is the following. In Section \ref{model}, the  considered model problems are introduced. Section \ref{fullysl} describes the SL advection--diffusion solver. A stability and convergence analysis of the method is outlined in Section
\ref{convergence}.
The possible approaches to the treatment of boundary conditions are discussed in Section \ref{bc}.
A numerical validation of the proposed approach on both structured and unstructured meshes 
is presented in Section \ref{tests}, while some conclusions and perspectives for future developments
are outlined in Section \ref{conclu}.

\section{The model problem}
\label{model}

We consider as a model problem the advection--diffusion--reaction equation with Dirichlet boundary conditions
\begin{eqnarray}\label{eq:adr}
&& c_t +   u \cdot \nabla  c - \nu \Delta c    = f(c) \hskip 2cm (x,t)\in\Omega \times \left( 0,T \right], \nonumber \\
&& c \left(x,t \right)= b \left(  x, t \right)   \hskip 3.2cm  (x,t)\in\partial \Omega \times \left( 0,T \right], \\
&& c \left(  x,0 \right)=c_0 \left(  x \right)  \hskip 3.5cm   x\in\Omega. \nonumber
\end{eqnarray}
The unknown $ c: \Omega\times[0,T]\to \mathbb{R} $  can be interpreted as the concentration of a chemical species that is transported through the domain $ \Omega $ by the advection and diffusion processes, while 
undergoing locally a nonlinear evolution determined by the source term $f(c).$
$T $ is the final time, $\Omega\subset \mathbb{R}^2$ is an open bounded domain, $  u: \Omega\times[0,T]\to \mathbb{R}^2 $ is a  velocity field and $ b: \partial \Omega\times[0,T]\to \mathbb{R}$  denotes the boundary value of the species $c. $ While the proposed numerical method will be presented in this simpler case, the target for more realistic applications are systems of   coupled advection--diffusion--reaction equations of the form
\begin{eqnarray}\label{eq:adrsys}
&&(c_k)_t+ u\cdot \nabla c_k - \nabla\cdot \left( D\nabla c_k \right ) = f_k(c_1,\dots,c_S) \hskip 0.3cm (x,t)\in\Omega \times (0,T], \nonumber\\
&&c_k (x,t)= b_k (x, t)   \hskip 3.4cm (x,t)\in\partial \Omega \times ( 0,T], \\
&&c_k (x,0)=c_{0,k} (x)   \hskip 3.4cm x\in\Omega, k=1,\dots,S. \nonumber
\end{eqnarray}
Here, $D$ denotes a symmetric and positive semi-definite diffusivity tensor, possibly dependent on
space and time. As remarked in Section \ref{intro},
systems of this kind, with a possibly large number of species $S$, are responsible for the largest share of the computational cost of typical environmental fluid dynamics models, so that even   minor increases
in the efficiency of the  discretization for this very classical problem are of great practical relevance.

\pagebreak

 \section{Fully semi-Lagrangian  methods}
 \label{fullysl}
 
 It can be observed that the evolution equation \eqref{eq:adr} is of the form
 $$c_t    = \mathcal L c +f(c), $$
 where ${\mathcal L}$ denotes a linear differential operator. 
 Consider then the homogeneous equation
\begin{equation}
\label{homeq}
\tilde c_t  = {\mathcal L}\tilde c 
\end{equation}
   associated to  \eqref{eq:adr}
 with the same initial datum $\tilde c(x,0)=c_0(x) $ and boundary conditions as in \eqref{eq:adr}.
 If the evolution operator determining the solution of \eqref{homeq} is denoted by ${\mathcal E}_t, $
 so that $ \tilde c(t)=  {\mathcal E}_t[c_0], $ by formal application of the variation of constants formula, the solution of 
  \eqref{eq:adr} can then be represented as
 \begin{equation} 
c(x,t)={\mathcal E}_t[c_0](x)    
+\int _0^t {\mathcal E}_{t-s}[f\circ c](x)   \ \dd s.  
\end{equation}
If discrete time levels $t_{n}, n=0,\dots, N $ are introduced, so that
$t_{n}=n\Delta t $ and $\Delta t=T/N,$ a numerical method for the solution 
of equation  \eqref{eq:adr} on the interval $[t_{n},t_{n+1}] $ can then be derived from the representation
formula
\begin{equation} 
\label{repform1}
c(x,t_{n+1})={\mathcal E}_{\Delta t}[c(.,t_{n})](x)    
+\int _{t_{n}}^{t_{n+1}} {\mathcal E}_{t_{n+1}-s}[f\circ c](x)   \ \dd s.  
\end{equation}
\noindent
Discretizing the time integral by the trapezoidal rule, one obtains
\begin{eqnarray} 
\label{repform2}
c(x,t_{n+1})&\approx &{\mathcal E}_{\Delta t}[c(.,t_{n})](x)  \nonumber  \\
&&+\frac{\Delta t}2 \left   [  {\mathcal E}_{\Delta t}[f\circ c](x)  + f(c(x,t_{n+1}) )     \right ] .
\end{eqnarray}
If the diffusion term is dropped in equation \eqref{eq:adr} and the evolution operator is approximated
by a numerical approximation of the flow streamline and an interpolation at the departure point of the
streamline, 
a numerical method based on
formula
  \eqref{repform2} can be interpreted as a semi-Lagrangian extension of the trapezoidal rule
  with global truncation error of order 2.
 Semi-Lagrangian methods based on this formula have
    been used successfully in a large number of applications,
see, among many others, 
 \cite{bonaventura:2000},\cite{casulli:1994},\cite{cote:1988},\cite{temperton:2001},\cite{temperton:1987},\cite{tumolo:2013}. Often, a first order, off-centered version of the above formula is employed,
defined for $\theta\in[1/2, 1] $ by
\begin{eqnarray} 
\label{repform3}
c(x,t_{n+1})&\approx &{\mathcal E}_{\Delta t}[c(.,t_{n})](x)    \\
&&+(1-\theta){\Delta t}  {\mathcal E}_{\Delta t}[f\circ c](x)  + \theta{\Delta t}f(c(x,t_{n+1})).
\nonumber
\end{eqnarray}


The goal of this work is to extend this approach to the case of a semi-Lagrangian treatment of
both advection and diffusion terms. In order to do this, we introduce a spatial discretization
mesh $\mathcal{G}_{\Delta x}=\{x_i, \; x_i \in  \Omega\},$ where $\Delta x$ denotes a measure of the mesh resolution. 
%
Notice that the only necessary restriction on the nature
of the mesh is  that  it should be possible to define a polynomial interpolation operator $I_p[c](\cdot)$ of degree $p$, constructed on the values of  a grid function $c$ defined on $\mathcal{G}_{\Delta x}$.
Assuming in the following that such operator is defined on structured and unstructured
meshes  (for the definition of such interpolation operator see for instance \cite{quarteroni:1994}),  examples of applications using both 
meshes will be considered.\\
We then denote the discrete approximations
of the solution of \eqref{eq:adr} at the space-time mesh nodes by $c_i^n $
and by
\begin{equation}\label{z_i}
z_i^{n+1}=X_{\Delta t}(x_i,t_{n+1};t_{n})
\end{equation}
some  numerical approximations of the streamlines starting at $x_i$  and defined by the velocity field $ u $ over the interval
 $[t_{n}, t_{n+1}].$ In particular, in \cite{bonaventura:2014},\cite{bonaventura:2018}
 explicit Euler or Heun methods were employed to compute these approximations,
 coupled to a substepping approach along the lines of \cite{casulli:1990},\cite{rosatti:2005}.
 More specifically, for the Euler method, 
 given a positive integer $m$,  a time substep was defined as
  $\Delta \tau=\Delta t/m$ and, for $q=0,\dots,m-1$,
\begin{eqnarray}
\label{substep_1st}
y^{(q+1)}&=&y^{(q)}-\Delta \tau u(y^{(q)},t_{n}) , \nonumber \\
y^{(0)}&=&x_i
\end{eqnarray}
was computed, so that $ z_i^{n+1}=y^{m}.$
 Following the approach outlined in \cite{bonaventura:2014} and applied in \cite{bonaventura:2018},
a first order in time approximation of the solution of \eqref{eq:adr} can then be defined as
\begin{eqnarray} 
\label{first_order}
c_i^{n+1}  &= &    \frac{1}{4} \sum_{k=1}^4 I_p[c^n] \left( z_i^{n+1}+ \delta_k \right)  \\
&&+(1-\theta)\Delta t \frac{1}{4} \sum_{k=1}^4 f(I_p[ c^n]) \left(  z_i^{n+1}+ \delta_k \right) 
 + \theta \Delta tf(c_i^{n+1}) ,
\nonumber
\end{eqnarray}
where $c^n=(c^n)_i$. In \eqref{first_order}, $\delta_k$ is defined, for $k=1,\ldots,4$, as
$$
\delta_k = \pm \delta  e_j
$$
($e_j$ $(j=1,2)$ denoting the canonical basis in $\R^2$), for all combination of both the sign and the index $j=1,2$; moreover,
$\delta=\sqrt{4\nu\Delta t}$.

Notice that, for simplicity, we neglect in \eqref{first_order} the treatment of boundary conditions. Possible approaches
to handle nontrivial boundary conditions will be discussed in Section \ref{bc}.

The method \eqref{first_order} will be denoted in what follows by SL1.
This method inherits the same stability and convergence properties of the parent methods,
as it will be discussed in Section \ref{convergence}. Notice that this approach can be extended
to spatially varying diffusion coefficients and that, while only first order in time, its effective accuracy can
be substantially superior to that of more standard techniques, if higher degree interpolation operators
are used, as shown in \cite{bonaventura:2014}.

 In order to derive a method of second order in time,  we follow the main steps of
   \cite{ferretti:2010},\cite{milstein:2013}.
 The streamlines  are interpreted  as  generalized characteristics, defined as the solutions of
 the stochastic differential equation:
$$\mathrm{d} X=-  u(X(s),s) \dd s+\sigma \dd W(s),\quad  X(t_{n+1})= x_i$$
for $s\in [0,\Delta t]$,  $\sigma=\sqrt{2 \nu}, $ where $ W(s) $ denotes a standard 2-dimensional Wiener process.
Applying the implicit weak method of order 2 defined in \cite{kloeden:1992} for the approximation of the streamlines, 
 we denote by 
 \begin{equation}
 \label{disp2}
 z^{n+1}_{k,i}= X^k_{\Delta t}(x_i,t_{n+1};t_n)
  \ \ \ \ (k=1,\dots,9)
  \end{equation}
the solutions of the nonlinear equations
\begin{equation}
\label{cnstream}
z^{n+1}_{k,i}= x_i-\frac{\Delta t}{2}\left(u(x_i,t_{n+1})+u(z^{n+1}_{k,i},t_{n}) \right)+\sqrt{3\Delta t}\sigma  e_k.
\end{equation}
Here, $e_k$ denote the vectors:
$$
e_1=\left(\begin{array}{c}
0\\
0
\end{array}\right),\quad
e_2=\left(\begin{array}{c}
0\\
1
\end{array}\right),\quad
e_3=\left(\begin{array}{c}
 0\\
-1
\end{array}\right),$$
$$
e_4=\left(\begin{array}{c}
1\\
0
\end{array}\right),\quad
e_5=\left(\begin{array}{c}
-1\\
 0
\end{array}\right),\quad
e_6=\left(\begin{array}{c}
1\\
1
\end{array}\right),$$
$$
e_7=\left(\begin{array}{c}
 1\\
-1
\end{array}\right),\quad
e_8=\left(\begin{array}{c}
-1\\
 1
\end{array}\right),\quad
e_9=\left(\begin{array}{c}
-1\\
-1
\end{array}\right).$$
These vectors represent a realization of a vector random variable whose distribution is given by the probabilities
$$\alpha_1=4/9,\quad \alpha_2=\alpha_3=\alpha_4=\alpha_5=1/9,
\quad\alpha_6=\alpha_7=\alpha_8=\alpha_9=1/36.$$
It is to be remarked that   method \eqref{cnstream} can be rewritten in terms of the displacements
$\delta^{n+1}_{k,i}=z^{n+1}_{k,i}-x_i $ as
\begin{equation}
\label{cnstream_disp}
 \delta^{n+1}_{k,i}=-\frac{\Delta t}{2}\left(u(x_i,t_{n+1})
+u(x_i+\delta_{k,i},t_{n}) \right)+\sqrt{3\Delta t}\sigma e_k,
\end{equation}
thus yielding an implicit method that is a natural extension to stochastic differential equations
of that introduced in \cite{robert:1982} and commonly used in meteorological applications for the computation
of streamlines in SL methods.
A second order in time SL (SL2) scheme can then be defined by a Crank-Nicolson approach as
\begin{eqnarray}
 \label{second_order_CN}
c_i^{n+1} &=&      \sum_{k=1}^9 \alpha_k \left( I_p[c^n] (z^{n+1}_{k,i})+\frac{\Delta t}{2}  f ( I_p[c^n] ( z^{n+1}_{k,i}))\right)
\nonumber \\
 &+&  \frac{\Delta t}2f(c_i^{n+1}).
\end{eqnarray}
Notice that, with respect to the simpler first order in time  variant \eqref{first_order}, 
nine interpolations at the foot of the streamlines must be computed,  which clearly makes
this approach substantially more expensive than the simpler, first order in time variant.
In applications to systems of the form \eqref{eq:adrsys}, the computational cost of scheme  \eqref{second_order_CN}
can be marginally reduced by setting
$$ 
\tilde c_i^{n}=  \sum_{k=1}^9 \alpha_k I_p[c^n] (z^{n+1}_{k,i})
$$
and defining
\begin{equation}
 \label{second_order_CN_nl}
c_i^{n+1} = \tilde c_i^{n}  +\frac{\Delta t}{2}  f(\tilde c_i^{n})+  \frac{\Delta t}2f(c_i^{n+1}),
\end{equation}
so as to reduce the number of the evaluations of a possibly costly nonlinear term.
Furthermore, when the coupling of the diffusion and  advection term is weak, it should be possible to decouple again the 
approximation of a  single deterministic streamline from that of the diffusive displacements,
which could be added at the end of each approximate streamline without increasing too much the error.
The deterministic streamline can then be computed by a substepping procedure, applied  either
to explicit methods as in \eqref{substep_1st}  or to implicit methods as in \eqref{cnstream}.
  
For example, a decoupled substepping variant of \eqref{cnstream} might be obtained by computing, for $q=0,\dots,m-1$,
\begin{eqnarray}
\label{substep_2nd}
 y^{(q+1)}&=& y^{(q)}\nonumber \\
&&-\frac{\Delta \tau}{2}\left[u(y^{(q)},t_{n+1}-q\Delta \tau)
+ u(y^{(q+1)},t_{n+1}-(q+1)\Delta \tau) \right],
\nonumber \\
y^0&=&x_i,
\end{eqnarray}
and setting $z_k =y^m+\sqrt{3\Delta t}\sigma e_k.$
 We will denote this  decoupled variant with substepping by SL2s.
 
Notice that, in realistic problems, a bottleneck of the scheme \eqref{second_order_CN} is the fact that the Crank--Nicolson method, while A-stable, is not L-stable, see e.g. \cite{lambert:1991}. Therefore, no damping is introduced by the method for very
large values of the time step and spurious oscillations may arise, see also the discussion in 
\cite{bonaventura:2017}. In order to reduce the computational cost and to address the L-stability issue, 
different variants of the  \eqref{second_order_CN} scheme could also be introduced and compared, along the lines
proposed in \cite{tumolo:2015} for the pure advection case. However, this development goes beyond the scope of this paper and will not be pursued here.

 Finally, even though achieving full second order consistency is quite complicated in the variable diffusion coefficient case,
 the previously introduced schemes can be nonetheless extended at least in the simpler configurations
 as suggested in \cite{bonaventura:2014} for the first order case, even though full second order accuracy is not
 guaranteed any more.
%
 
 \section{Convergence  analysis} \label{convergence}

We present in this section a convergence analysis for the scheme \eqref{second_order_CN}.
For simplicity, we assume a one-dimensional problem defined on $\mathbb{R}\times[0,T]$, with a time-independent drift term $u$:
\begin{equation}\label{eq:adr1d}
\begin{cases}
c_t+ u(x) c_x - \nu c_{xx} = f(c) & (x,t)\in \mathbb{R} \times \left( 0,T \right], \\
c (x,0 )=c_0 ( x) &  x\in\mathbb{R}.
\end{cases}
\end{equation}
The multidimensional case, as well as the time dependence of $u$, require only small technical adaptations. On the other hand, the convergence analysis on bounded domains is still an open problem for high-order SL schemes, therefore we will not address this problem here.\\
First, for $i\in \Z$ and $n=0,\dots,N-1$, we rewrite the scheme \eqref{second_order_CN} with the shorthand notation
\begin{equation}\label{eq:SL2compact}
c_i^{n+1} =S_{\Delta t, \Delta x} \left( c^{n+1}, c^n, x_i \right),
\end{equation}
where $x_i=i\Delta x$, and 
\begin{eqnarray*}
S_{\Delta t, \Delta x} \left( c^{n+1}, c^n, x_i \right)&=& \sum_k \alpha_k \left[I_p [ c^n ](z_k (x_i ) ) + \frac{\Delta t}{2} f(I_p [c^n]( z_k (x_i )) \right] \nonumber \\
&&+\frac{\Delta t}{2} f \left( c_i^{n+1}\right).
\end{eqnarray*}
In one space dimension, the three discrete characteristics are defined by the equations
\begin{eqnarray*}
z_+(x) &=& x - \frac{\Delta t}{2}[u ( x ) + u ( z_+(x)) ]+ \sqrt{6 \Delta t \nu}, \\
z_{-}(x) &=& x - \frac{\Delta t}{2} [ u( x ) + u ( z_-(x) )] - \sqrt{6 \Delta t \nu}, \\
z_0(x) &=& x - \frac{\Delta t}{2}[u( x ) + u ( z_0(x) )],
\end{eqnarray*}
with corresponding weights $\alpha_+=\alpha_-=1/6$ and $\alpha_0=2/3$.
In what follows, we will use the symbol $K$ to denote various positive constants, which do not depend on $\dt,x,t$. We will also assume that:
\begin{itemize}
\item[\bf(H0)] there exists a unique classical solution of  \eqref{eq:adr1d};
\item[\bf(H1)] $f(x)\in C^4(\mathbb{R})$ with $|f^{(p)}(x)|\leq K $ for $p\leq 4;$ 
\item[\bf(H2)] $u(x)\in C^2(\mathbb{R})$ with $|u^{(p)}(x)| \leq K$  for $p\leq 2;$ 
\item[\bf(H3)] for any $v(x)\in C^{p+1}(\mathbb{R})$ with  bounded derivatives, $I_p[v]$ is a piecewise polynomial interpolation operator such that for any $x\in \mathbb{R}$
$$| I_p [v](x)-v(x)|\leq K \Delta x^p.$$ 
\end{itemize}

\subsection{Consistency}
First, we derive a consistency result via a  Taylor expansion. The same kind of result can be obtained by probabilistic arguments, see \cite{milstein:2000}.
\begin{Prop}\label{Prop:consistency} Assume {\bf(H1)}--{\bf(H3)}, and let $c(x,t)$ be a smooth solution of \eqref{eq:adr1d}. Then, for each $(i,n)\in \Z\times\{0,\dots,N-1\}$ the consistency error  of the scheme \eqref{second_order_CN}, defined as
$$\mathcal{T}_{ \Delta t,\Delta x} (x_i,t_{n})= 
\frac{1}{\dt}\left(c(x_i,t_{n+1})-
S_{\Delta t, \Delta x} \left( c(t_{n}), c(t_{n+1}), x_i \right)\right)$$
where  $c(t_{n})=(c(x_i,t_n))_i$,  is such that
$$\mathcal{T}_{ \Delta t,\Delta x} (x,t) =  \mathcal{O} \left ( \Delta t^2+\frac{\Delta x^p}{\dt} \right).$$
\end{Prop}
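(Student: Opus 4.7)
My plan is to apply the discrete operator $S_{\Delta t,\Delta x}$ to the exact solution and Taylor-expand the result around $(x_i,t_{n+1})$, using the PDE \eqref{eq:adr1d} to trade time derivatives of $c$ for spatial ones. The consistency error then splits naturally into three contributions: (a) the piecewise-polynomial interpolation error, (b) the stochastic quadrature at the feet of the characteristics, and (c) the trapezoidal quadrature of the reaction source.

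For (a), hypothesis {\bf(H3)} gives $|I_p[c(t_n)](z_k)-c(z_k,t_n)|\le K\Delta x^p$, and by {\bf(H1)} this bound propagates through $f(I_p[\cdot])$ with a Lipschitz constant. Division by $\Delta t$ in the definition of $\mathcal T_{\Delta t,\Delta x}$ then yields the $O(\Delta x^p/\Delta t)$ term. For the remaining time-discretization error I expand each characteristic by iterating its implicit defining relation:
$$z_k(x_i)-x_i=\sqrt{6\Delta t\,\nu}\,e_k-\Delta t\,u(x_i)+R_k,$$
with $R_k$ collecting computable corrections of order $\Delta t^{3/2}$ and higher in terms of $u,u'$ and $e_k$. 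I then Taylor-expand $c(z_k,t_n)$ about $(x_i,t_{n+1})$ up to order $\Delta t^2$ and substitute $c_t=-uc_x+\nu c_{xx}+f(c)$, together with its differentiated form for $c_{tt}$, to reduce the remainder to spatial derivatives of $c$ evaluated at $(x_i,t_{n+1})$.

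The key step is that the weights $\alpha_k$ and nodes $e_k\in\{+1,-1,0\}$ satisfy the moment identities
$$\sum_k\alpha_k=1,\ \sum_k\alpha_k e_k=0,\ \sum_k\alpha_k e_k^2=\tfrac13,\ \sum_k\alpha_k e_k^3=0,\ \sum_k\alpha_k e_k^4=\tfrac13,$$
which make $\sqrt{6\Delta t\nu}\,e_k$ reproduce the first four moments of the Wiener increment $\sigma(W(t_{n+1})-W(t_n))$ with $\sigma^2=2\nu$. After taking the weighted sum $\sum_k\alpha_k\,c(z_k,t_n)$, these identities cancel precisely the terms of orders $1$ and $\Delta t$ in the Taylor expansion, leaving a local truncation of $O(\Delta t^3)$. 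The reaction contribution $\tfrac{\Delta t}{2}\sum_k\alpha_k f(c(z_k,t_n))+\tfrac{\Delta t}{2}f(c(x_i,t_{n+1}))$ is a trapezoidal quadrature of a smooth integrand and likewise contributes an $O(\Delta t^3)$ local error; dividing by $\Delta t$ yields the claimed $O(\Delta t^2)$ bound.

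The main obstacle will be the bookkeeping of the implicit characteristic relation: a consistent expansion to order $\Delta t^2$ produces mixed drift-diffusion cross terms such as $\sqrt{6\Delta t\nu}\,e_k\,u'(x_i)\cdot\Delta t$ inside $R_k$, and these must be carried through the Taylor expansion before the moment identities can be invoked to eliminate the odd contributions. A subsidiary but useful observation is that when $S_{\Delta t,\Delta x}$ is applied to the exact solution, the implicit reaction $f(c_i^{n+1})$ becomes simply $f(c(x_i,t_{n+1}))$, which makes the trapezoidal structure at $t_{n+1}$ transparent and allows the Feynman--Kac representation underlying the scheme to be matched term by term.
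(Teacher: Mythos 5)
Your proposal follows essentially the same route as the paper's proof: Taylor expansion of the exact solution at the feet of the discrete characteristics (the paper expands about $(x,t_n)$ rather than $(x_i,t_{n+1})$, an immaterial difference), use of the differentiated PDE to express $c_{tt}$ through spatial derivatives, cancellation of the half-integer and low-order powers of $\Delta t$ via the symmetry/moment identities of the $(\alpha_k,e_k)$ quadrature (which the paper uses implicitly by direct computation), the same averaged expansion of the reaction term producing the $\Delta t\,\nu f''(c)c_x^2$ correction that matches the corresponding term in $c_{tt}$, and finally the interpolation error from {\bf(H3)} divided by $\Delta t$. The one bookkeeping point to make explicit: the spatial expansion of $c(z_\pm,t)$ must be carried to the fifth-order term, as in \eqref{eq:sviluppoc}, whose leading $\Delta t^{5/2}$ contribution cancels only because the odd fifth moment $\sum_k\alpha_k e_k^5=0$ vanishes; truncating at the $\Delta t^2$ term as you state would leave an unexamined $\mathcal{O}(\Delta t^{5/2})$ remainder and hence only $\mathcal{O}(\Delta t^{3/2})$ consistency after division by $\Delta t$.
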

\begin{proof}
In what follows, we will omit the argument of  functions  computed at $(x,t)$. Consider a smooth solution $c$ of \eqref{eq:adr1d}.  Since assumption {\bf(H1)} holds, by differentiating in time and space \eqref{eq:adr1d} we get that $c$ is also solution of
\begin{equation}\label{eq:tt}
c_{tt} + u \left( x \right) c_{xt} - \nu c_{xxt} = f'(c)c_t,
\end{equation}
\begin{equation}\label{eq:2}
c_{tx} + u'(x) c_x + u \left( x \right) c_{xx} - \nu c_{xxx} = f'(c)c_x,
\end{equation}
and hence, by differentiating again in space \eqref{eq:2}, of 
\begin{eqnarray}\label{eq:3}
&&c_{txx} + u''(x) c_x + u'(x) c_{xx} + u'(x) c_{xx} + u \left( x \right) c_{xxx} - \nu c_{xxxx} \nonumber\\
 &&= f''(c)(c_x)^2+f'(c)c_{xx}.
\end{eqnarray}
Using  \eqref{eq:2} and \eqref{eq:3} in \eqref{eq:tt}, we get :
\begin{eqnarray}\label{eq:ctt}
 c_{tt} & = &u u' c_c + u^2 c_{xx} - u \nu c_{xxx} - \nu u'' c_x -2 \nu u' c_{xx} - u\nu c_{xxx} + \nu^2 c_{xxxx} \nonumber\\ 
& = &\left( u u' - \nu u' \right) c_x + \left( u^2 - 2 \nu u' \right) c_{xx} - 2 u \nu c_{xxx} + \nu^2 c_{xxxx} \nonumber\\
& &+f'(c)(c_t-uc_x+\nu c_{xx})+\nu f''(c)(c_x)^2.
\end{eqnarray}
Define now $U_{\pm}(x) =  u \left( x \right) + u \left( z_{\pm}(x)\right)$.
By a Taylor expansion of $c( z_{\pm}(x), t )$ in space around $(x,t)$, we obtain
\begin{eqnarray}\label{eq:sviluppoc}
c ( z_{\pm}(x), t ) &=&c + \left(\pm \sqrt{6 \Delta t \nu} - \dtm U_\pm \right) c_x + \frac{1}{2} \left( \pm\sqrt{6 \Delta t \nu} - \dtm  U_\pm \right)^2 c_{xx}  \nonumber \\ 
&&+\frac{1}{6} \left( \pm\sqrt{6 \Delta t \nu} - \dtm  U_\pm \right)^3 c_{xxx} \nonumber \\
&&+ \frac{1}{24}  \left(\pm \sqrt{6 \Delta t \nu} - \dtm  U_\pm  \right)^4 c_{xxxx} \nonumber \\
&&+\frac{1}{120}  \left(\pm \sqrt{6 \Delta t \nu} - \dtm  U_\pm  \right)^5 c_{xxxxx} + \mathcal{O}( \Delta t^3 ) 
\end{eqnarray}
and, defining $U_{0}(x) =  u \left( x \right) + u \left( z_{0}(x)\right)$,
\begin{equation}\label{eq:sviluppoc0}
c \left( z_0(x), t \right) = c- \dtm U_0 c_x + \frac{1}{2} \left(- \dtm  U_0  \right)^2 c_{xx} + \mathcal{O} \left( \Delta t^3 \right). 
\end{equation}
Using \eqref{eq:ctt},\eqref{eq:sviluppoc},\eqref{eq:sviluppoc0} and  the Taylor expansion  
$$c ( x, t + \Delta t ) = c + \Delta t c_t + \frac{\Delta t^2}{2} c_{tt} + \mathcal{O} \left( \Delta t^3 \right),$$
 we obtain
%
\begin{eqnarray}\label{err_senzaf}
c \left(x, t + \Delta t \right) & - & \sum_k \alpha_k c (z_k (x),t) = \dt(c_t + u \left( x \right) c_x - \nu c_{xx}) 
\nonumber \\
&+&\frac{\dt^2}{2}(f'(c)(c_t-uc_x+\nu c_{xx})+\nu f''(c)(c_x)^2) \nonumber \\
&+& \mathcal{O} \left ( \Delta t^3 \right).
\end{eqnarray}
Consider now the nonlinear reaction term. By assumption {\bf(H3)}, we have that
\begin{eqnarray}\label{f_sviluppo}
f(c( z,t))=f(c)+f'(c)(c( z,t) -c)+\frac{1}{2}f''(c)(c ( z,t) -c)^2 \nonumber \\
+\frac{1}{6}f'''(c)(c ( z,t) -c))^3+\mathcal{O} \left( (c( z,t) -c)^4 \right) 
\end{eqnarray}
Using \eqref{err_senzaf} in \eqref{f_sviluppo}, and taking into account that $c(z_\pm,t)=c \pm\sqrt{6\dt \nu} c_x+ \mathcal{O}(\dt)$ and  $c ( z_0(x),t) =c+ \mathcal{O}(\dt)$, we obtain
\begin{eqnarray}\label{f_cons}
\sum_k(\alpha_k  f( c( z_k (x ),t))&=& f(c)
+f'(c)\left(c(x,t+\dt)-\dt(c_t+uc_x-\nu c_{xx})-c\right)\nonumber \\
 &+& f''(c)(\dt \nu c^2_x)+ \mathcal{O}(\dt^2). 
\end{eqnarray}
By \eqref{f_cons} and \eqref{err_senzaf}, we get  the consistency error  for the semi-discretization,
\begin{eqnarray}\label{err_cons}
c (x, t + \Delta t) &-&\sum_k\alpha_k\left( c (z_k (x) ,t)+ \frac{\Delta t}{2}  f( c( z_k (x ),t))\right)\nonumber\\
&-&\frac{\Delta t}{2} f( c(x_,t+\dt ))\nonumber \\
&=& \dt(c_t + u( x) c_x - \nu c_{xx}-f(c))+ \mathcal{O} \left ( \Delta t^3 \right).
\end{eqnarray}
Introducing the interpolation error and using assumptions {\bf(H2)} and {\bf(H1)}, we finally prove the consistency error for the fully discrete scheme.
\end{proof}
\subsection{Stability}
To prove stability, it is convenient to recast \eqref{eq:SL2compact} in the matrix form
\begin{equation*}
c^{n+1} - \frac{\Delta t}{2} f \left( c^{n+1}\right) = \sum_k \alpha_k \left[ B_k c^n + \frac{\Delta t}{2} f \left( B_k c^n \right) \right],
\end{equation*}
where, for a vector $c$, $f(c)$ denotes the vector obtained by applying $f$ elementwise and the matrices $B_k$ (which represent the operation of interpolating $c^n$ at the points $z_k(x_i)$) have elements $b_{k,ij}$ defined by
\begin{equation}\label{def:D}
b_{k,ij}= \psi_j(z_k(x_i)),
\end{equation}
for a suitable basis of cardinal functions $\{\psi_j\}$.
The following proposition implies stability for the linear part of the scheme with respect to the 2-norm. For a formal definition of the basis functions $\psi_j$ we refer the reader to \cite{ferretti:2013a}, \cite{FM19}.
\begin{Prop}\label{Prop:stability} Assume {\bf(H2)}, and let the matrix $B$ have elements defined by \eqref{def:D}, with $(\psi_j)$ basis functions for odd degree symmetric Lagrange or splines interpolation. Then,  for each $k$, there exists a constant $K_B>0$ independent on $\Delta x, \dt$ such that
\begin{equation}\label{eq:03}
\|B_k\| \le 1+K_B\Delta t.
\end{equation}
\end{Prop}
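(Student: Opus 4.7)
The plan is to decompose each matrix $B_k$ as a pure Toeplitz shift-and-interpolate operator associated to the purely diffusive displacement, plus an $\mathcal{O}(\dt)$ drift-induced perturbation, and to bound the two contributions separately in the $\ell^{2}(\Z)$ operator norm.

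To set this up, I would use assumption \textbf{(H2)} and the implicit characteristic equations defining $z_{\pm}$, $z_{0}$ to show, by a contraction argument for $\dt$ small enough, that
\[
z_k(x_i)=x_i+\bar s_k+r_{k,i},\qquad \bar s_k\in\{0,\pm\sqrt{6\,\dt\,\nu}\},\qquad |r_{k,i}|\leq K\dt,
\]
with $i\mapsto r_{k,i}$ moreover Lipschitz with Lipschitz constant $\mathcal{O}(\dt)$ (using that $u\mapsto u+u\circ z$ inherits Lipschitz regularity from $u$). Writing $B_k=T_k+R_k$ with $T_{k,ij}=\psi_j(x_i+\bar s_k)$ and $R_{k,ij}=\psi_j(x_i+\bar s_k+r_{k,i})-\psi_j(x_i+\bar s_k)$ then splits the analysis into the treatment of a pure shift and of a small smooth perturbation.

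Since the mesh is uniform and $\psi_j(\cdot)=\psi(\cdot/\Delta x-j)$ for a fixed cardinal function $\psi$, the matrix $T_k$ is Toeplitz, and its $\ell^{2}(\Z)$ norm equals the $L^{\infty}$ norm of the symbol
\[
\hat t_k(\xi)=\sum_{m\in\Z}\psi\!\left(m-\bar s_k/\Delta x\right)e^{-im\xi}.
\]
For odd degree symmetric cardinal spline interpolation the bound $\|\hat t_k\|_{\infty}\leq 1$ is classical, since the underlying kernel is a positive centered convolution of characteristic functions with unit integral; for odd degree symmetric Lagrange interpolation the same bound, which is much less obvious, is the content of the spectral analysis carried out in \cite{ferretti:2013a,FM19}. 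Invoking these references gives $\|T_k\|_{2}\leq 1$.

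For the remainder $R_k$, a first order Taylor expansion of $\psi$, together with its compact support and piecewise polynomial regularity, yields $|R_{k,ij}|\leq K\dt/\Delta x$ on $\mathcal{O}(1)$ entries per row and column, and hence the crude estimate $\|R_k\|_{2}\leq \sqrt{\|R_k\|_{1}\|R_k\|_{\infty}}\leq K\dt/\Delta x$. To upgrade this to the sharper $\|R_k\|_{2}\leq K\dt$, which is required to keep $K_B$ independent of the ratio $\dt/\Delta x$ and thus preserve the CFL-free character of the method, I would factor $R_k\simeq \mathrm{diag}(r_{k,i})\,D_k$, where $D_k$ is the discrete shift-differentiation-interpolation operator $(\partial_x I_p)[\,\cdot\,](x_i+\bar s_k)$ evaluated at the reference shift, and exploit the $\mathcal{O}(\dt)$ Lipschitz regularity of $i\mapsto r_{k,i}$ via a commutator-type estimate to absorb the spurious $\Delta x^{-1}$ factor into the smoothness of $r$. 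Combining $\|T_k\|_{2}\leq 1$ and $\|R_k\|_{2}\leq K\dt$ then yields \eqref{eq:03}. The hard part of the argument is twofold: the Toeplitz symbol bound for the Lagrange case, which genuinely relies on the spectral analysis of the cited works, and the commutator estimate that converts the naive $\mathcal{O}(\dt/\Delta x)$ bound on $R_k$ into the $\mathcal{O}(\dt)$ bound needed for unconditional $\ell^{2}$ stability.
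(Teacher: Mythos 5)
Your preliminary step (solving the implicit characteristic equation by contraction to get $z_k(x_i)=x_i+\bar s_k+r_{k,i}$ with $|r_{k,i}|\le K\dt$ and $i\mapsto r_{k,i}$ Lipschitz with constant $\mathcal{O}(\dt)$) is sound and is essentially the same verification the paper performs on $z_k$. The gap is in the second half: the decomposition $B_k=T_k+R_k$ followed by the triangle inequality cannot work, because the target bound $\|R_k\|_2\le K\dt$ is false, not merely hard. Take the simplest case $r_{k,i}\equiv r$ constant (Lipschitz constant $0$, so your commutator has nothing to gain) and piecewise linear interpolation with $\theta=\bar s_k/\Delta x \bmod 1$: the symbol of $T_k$ is $(1-\theta)+\theta e^{i\xi}$ and that of $R_k$ is $\frac{r}{\Delta x}\left(e^{i\xi}-1\right)+\mathcal{O}\bigl((r/\Delta x)^2\bigr)$, whose modulus at $\xi=\pi$ is $\sim 2r/\Delta x$. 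Hence $\|R_k\|_2\sim \dt/\Delta x$ \emph{exactly}, i.e.\ of the order of the Courant number, which the scheme is designed to take large. The same obstruction appears for any interpolation degree, since the derivative of the symbol with respect to the shift does not vanish away from $\xi=0$. No factorization $R_k\simeq\mathrm{diag}(r_{k,i})D_k$ or commutator rearrangement can repair this: $R_k$ itself genuinely has norm $\mathcal{O}(\dt/\Delta x)$, and yet $\|T_k+R_k\|_2\le 1$ in this example because $T_k+R_k$ is again a unit-norm shift-interpolation operator. In other words, the perturbation moves $T_k$ \emph{along} the set of norm-$\le 1$ operators, and the cancellation between $T_k$ and $R_k$ is destroyed the moment you bound the two pieces separately.

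The correct argument — which is the route the paper takes by reducing to \cite{ferretti:2013a}, \cite{FM19} — never splits off the drift. It estimates $\|B_kv\|_2$ as a whole, interpreting the scheme as a Lagrange--Galerkin scheme with area-weighting: one compares $\sum_i|I_p[v](z_k(x_i))|^2$ with the corresponding integral, changes variables $y=z_k(x)$, and uses the $L^2$-stability of the interpolation. The two structural conditions you would need to verify (and which the paper checks survive the additional constant displacement $\sqrt{3\Delta t}\,\sigma e_k$, since it cancels in differences) are precisely $|z_k(x_1)-z_k(x_2)|\ge\frac12|x_1-x_2|$ and $|z_k(x_1)-(x_1-x_2+z_k(x_2))|\le K_X|x_1-x_2|\dt$; the second plays the role your Lipschitz bound on $r_{k,i}$ was meant to play, but it enters through a quadrature/almost-orthogonality estimate on $\|B_kv\|_2^2$, not through $\|B_k-T_k\|_2$. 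A secondary inaccuracy: for spline \emph{interpolation} the cardinal function is Schoenberg's fundamental spline, which is not compactly supported and is not a positive convolution of characteristic functions (that describes the B-spline, i.e.\ quasi-interpolation); the symbol bound for interpolatory splines still holds but needs the argument of the cited references, not positivity.
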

\begin{proof}
If we adopt a piecewise linear interpolation, the scheme is monotone and satisfies \eqref{eq:03} both in the $\infty$-norm (with $K_B=0$) and in the 2-norm, see for example \cite{ferretti:2013a}. We will therefore focus on the case of high-order interpolations, for which the norm in \eqref{eq:03} should be understood as 2-norm. 
Following \cite{ferretti:2013a},  \cite{FM19}, we sketch the arguments to prove \eqref{eq:03} for the cases of symmetric Lagrange and splines interpolation, which can be interpreted as Lagrange--Galerkin schemes with area-weighting. 
First, we make explicit the dependence of the points $z_k$ on $x$ and $\Delta t$. We recall that $z_k(x)=x+\delta_k(x)$, with $\delta_k$ solving the equation:
\[
\delta_k(x) = -\frac{\Delta t}{2}\left(u(x) + u(x+\delta_k(x))\right) + \sqrt{3\Delta t}\sigma e_k.
\]
Expanding the term $u(x+\delta_k(x))$, we obtain therefore
\[
\delta_k(x) = -\frac{\Delta t}{2}\left(u(x) + u(x)+\delta_ku'(x)) + \mathcal{O}\left(\delta_k^2\right)\right) + \sqrt{3\Delta t}\sigma e_k,
\]
and hence,
\[
\delta_k(x)\left(1 + \frac{\Delta t}{2}u'(x)\right) = -\Delta t \> u(x) + \sqrt{3\Delta t}\sigma e_k + \mathcal{O}\left(\Delta t\delta_k^2\right).
\]
Dividing now by $1 + \Delta t\>u'/2$, and using the fact that $\delta_k=\mathcal{O}(\sqrt{\Delta t})$, we get, for $\Delta t\to 0$,
\begin{equation}\label{eq:z_k}
z_k(x) = x - \Delta t \> u(x) + \sqrt{3\Delta t}\sigma e_k + \mathcal{O}\left(\Delta t^2\right).
\end{equation}
Due to the term $\sqrt{3\Delta t}\sigma e_k$,  the form of  \eqref{eq:z_k}  does not coincide with that
used in \cite{ferretti:2013a} for the points $z_k(x)$. However, when considering differences $z_k(x_1)-z_k(x_2)$, this additional term is cancelled, so that
\[
z_k(x_1)-z_k(x_2) = (x_1-x_2) - \Delta t (u(x_1)-u(x_2)) + \mathcal{O}\left(\Delta t^2\right).
\]
As a consequence, the form \eqref{eq:z_k} still satisfies the relevant properties used in the proof of \eqref{eq:03}. In particular, it satisfies the condition \cite[Lemma 3]{ferretti:2013a}
\[
|z_k(x_1)-z_k(x_2)| \ge \frac 1 2 |x_1-x_2|,
\]
for $\Delta t$ small enough, as well as the condition \cite[Theorem 4]{ferretti:2013a}
\[
|z_k(x_1)-(x_1-x_2+z_k(x_2))| \le K_X |x_1-x_2|\Delta t,
\]
for a suitable positive constant $K_X$. Then, a careful replica of the arguments used in \cite{ferretti:2013a} provides the estimate \eqref{eq:03}.
\end{proof}
\subsection{Convergence}
We now present a convergence result in the discrete 2-norm.
\begin{Theorem} 
 Assume {\bf (H0)}--{\bf (H3)}.  Let $c(x,t)$ be the classical solution of \eqref{eq:adr1d}, and $c^n$ be the solution of \eqref{eq:SL2compact}. Then, for any $n$ such that $t_{n}\in[0,T]$ and for $(\dt, \Delta x)\to 0$,
$$
\| c(t_n)-c^n \|_2 \leq K_T\left(\Delta t^2+\frac{\Delta x^p}{\Delta t}\right),
$$
where $K_T$ is positive constant depending on the final time $T$.
\end{Theorem}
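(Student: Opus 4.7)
The plan is to combine the local consistency bound from Proposition~\ref{Prop:consistency} with the linear stability of the interpolation operators from Proposition~\ref{Prop:stability} in a classical Lax-style consistency-plus-stability argument, modified to accommodate both the Crank--Nicolson treatment of the reaction term and the nonlinearity of $f$.

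First, I would set up the error vector $e^n := c(t_n)-c^n$ by observing that, by definition of $\mathcal{T}_{\dt,\Delta x}$ in Proposition~\ref{Prop:consistency}, the grid restriction of the exact solution satisfies, in the matrix notation introduced in Proposition~\ref{Prop:stability},
\begin{equation*}
c(t_{n+1})-\dtm f(c(t_{n+1})) = \sum_k \alpha_k \bigl[B_k\,c(t_n)+\dtm f(B_k\,c(t_n))\bigr] + \dt\,\mathcal{T}^n,
\end{equation*}
where $\mathcal{T}^n$ denotes the grid vector with entries $\mathcal{T}_{\dt,\Delta x}(x_i,t_n)$. Subtracting the scheme \eqref{eq:SL2compact} in the same matrix form yields a linear recursion for $e^n$ in which every occurrence of $f$ appears as a difference $f(\cdot)-f(\cdot)$ and the only non-homogeneous contribution is the truncation vector $\dt\,\mathcal{T}^n$.

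Next, I would take the discrete $2$-norm and bound the right-hand side using three ingredients: (i) the stability estimate $\|B_k\|\le 1+K_B\dt$ from Proposition~\ref{Prop:stability}; (ii) the global Lipschitz property of $f$, with constant $L$, guaranteed by assumption {\bf(H1)}; and (iii) the fact that the weights $\alpha_k$ are non-negative and satisfy $\sum_k \alpha_k = 1$. These together yield
\begin{equation*}
\Bigl(1-\tfrac{L\dt}{2}\Bigr)\|e^{n+1}\|_2 \le (1+K_B\dt)\Bigl(1+\tfrac{L\dt}{2}\Bigr)\|e^n\|_2 + \dt\,\|\mathcal{T}^n\|_2,
\end{equation*}
and, for $\dt<2/L$, dividing by $1-L\dt/2$ produces a recursion of the form $\|e^{n+1}\|_2\le(1+C\dt)\|e^n\|_2+C'\dt\,\|\mathcal{T}^n\|_2$ with constants independent of $n$, $\dt$, and $\Delta x$.

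Finally, I would iterate this recursion, use $e^0=0$ together with $(1+C\dt)^n\le e^{CT}$ for $n\dt\le T$, and invoke the consistency bound $\|\mathcal{T}^n\|_2=\mathcal{O}(\dt^2+\Delta x^p/\dt)$ of Proposition~\ref{Prop:consistency} to conclude with a constant $K_T$ proportional to $T\,e^{CT}$. The main delicate point I expect is the coupling between the implicit reaction term and the stability bound: the term $\dtm f(c^{n+1})$ must be absorbed on the left-hand side under the smallness condition $\dt<2/L$, and the convexity $\sum_k \alpha_k=1$ is essential to keep the amplification factor at $1+K_B\dt$ instead of letting the nine individual operator bounds accumulate, which would still give convergence but with a drastically worse effective constant.
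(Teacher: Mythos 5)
Your proposal is correct and follows essentially the same route as the paper's proof: the authors likewise write the exact solution's grid restriction via the consistency result, subtract the scheme in the matrix form of Proposition~\ref{Prop:stability}, absorb the implicit reaction term on the left under the smallness condition on $\Delta t$ using the Lipschitz constant of $f$, exploit $\sum_k\alpha_k=1$ together with $\|B_k\|\le 1+K_B\Delta t$ on the right, and conclude with the discrete Gronwall iteration. The only difference is cosmetic: the paper carries the local error as $\mathcal{O}(\Delta t^3+\Delta x^p)$ rather than as $\Delta t\,\|\mathcal{T}^n\|_2$, which is the same quantity.
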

\begin{proof}
While a mere convergence proof could be carried out with weaker regularity assumptions, we will focus here on the error estimate above, which require the regularity assumed in {\bf (H0)}--{\bf (H3)}.
Define the vectors $\gamma^n$ and $\nu^n$, so that $\gamma_i^n=c(x_i,t_{n})$, and $\epsilon^{n}  =\gamma^n - c^{n}$. Then,  by Proposition \ref{Prop:consistency}, we get
\begin{eqnarray}\label{eq:04}
\gamma^{n+1} - \frac{\Delta t}{2} f \left(\gamma^{n+1}\right) & = & \sum_k \alpha_k \left[ B_k^n \gamma^n + \frac{\Delta t}{2} f \left( B_k^n \gamma^n \right) \right] \nonumber \\
&&+ \mathcal{O}(\Delta t^3+\Delta x^p).
\end{eqnarray}
Subtracting \eqref{eq:SL2compact} from \eqref{eq:04}, using the Lipschitz continuity of $f$ and the triangle inequality in the form of a difference, we obtain from the left-hand side:
\begin{equation*}
\left\|\gamma^{n+1} - \frac{\Delta t}{2} f \left(\gamma^{n+1}\right) - c^{n+1} + \frac{\Delta t}{2} f \left(c^{n+1}\right)\right\|_2 \ge \left(1-\frac{L_f\Delta t}{2}\right)\left\|\epsilon^{n+1}\right\|_2.
\end{equation*}
Taking into account that $\sum_k\alpha_k=1$, along with the bound \eqref{eq:03}, we also have from the right-hand side:
\begin{eqnarray*}
&&\left\|\gamma^{n+1} - \frac{\Delta t}{2} f \left(\gamma^{n+1}\right) - c^{n+1} + \frac{\Delta t}{2} f \left(c^{n+1}\right)\right\|_2 \\
 &&\le\left(1+\frac{L_f\Delta t}{2}\right)(1+K_B\Delta t)\left\|\epsilon^n\right\|_2 + \mathcal{O}(\Delta t^3+\Delta x^p).
\end{eqnarray*}
Therefore, it turns out that
\begin{eqnarray}\label{eq:05}
\left(1-\frac{L_f\Delta t}{2}\right)\left\|\epsilon^{n+1}\right\|_2 & \le & \left(1+\frac{L_f\Delta t}{2}\right)(1+K_B\Delta t)\left\|\epsilon^n\right\|_2 \nonumber \\
&& + \mathcal{O}(\Delta t^3+\Delta x^p).
\end{eqnarray}
Now, for $\Delta t$ small enough to have $1-L_f\Delta t/2>\underline C > 0$, we have that there exists a constant $K_T>0$ such that
\[
\frac{\displaystyle 1+\frac{L_f\Delta t}{2}}{\displaystyle 1-\frac{L_f\Delta t}{2}}(1+K_B\Delta t) \le 1+K_T\Delta t,
\]
and hence, using this bound in \eqref{eq:05},
\begin{equation}\label{eq:06}
\left\|\epsilon^{n+1}\right\|_2 \le (1+K_T\Delta t)\left\|\epsilon^n\right\|_2 + \mathcal{O}(\Delta t^3+\Delta x^p),
\end{equation}
which, by standard arguments, implies that, for any $n$ such that $t_{n}\in[0,T]$,
\[
\left\|\epsilon^n\right\|_2 \le K_T\left(\Delta t^2+\frac{\Delta x^p}{\Delta t}\right).
\]
\end{proof}
\section{Boundary conditions}
 \label{bc}
 
The treatment of Dirichlet boundary conditions for this class of semi-Lagrangian methods has been considered  in 
\cite{milstein:2001}, where two methods are proposed. One approach has first order of consistency, but it does not seem 
possible to generalize it to multiple dimensions. The second approach has order of consistency $1/2.$ More recently, in \cite{bonaventura:2018}, 
an easier treatment has been proposed for the scheme SL1 with time-independent Dirichlet boundary condition, again with order of consistency $1/2.$ This approach has been extended in \cite{bonaventura:2020} to unstructured meshes.

We propose here a new approach to obtain second order consistency for the scheme SL2 with Dirichlet boundary conditions. This technique is based on extrapolation, much in the spirit of the so-called {\em ghost-point} techniques,
see e.g. \cite{leveque:2002}.
 

In addition to the standard mesh $\mathcal{G}_{\Delta x}=\{x_i, \; x_i \in \overline \Omega\},$  where  the numerical solution is computed, we consider a second mesh $\mathcal{G}_h=\{v_i, \; v_i \in \overline \Omega\}$ formed by a single layer of elements having their external side along the boundary
of   $  \Omega.$ This second mesh is constructed with a size parameter $h\sim \sqrt{\Delta t}$, and the degrees of freedom are chosen in order to allow a second-order interpolation.
In Fig.\ref{fig:boundarymesh} we show, as an example,  a square domain $\Omega=(-1,1)\times(-1,1)$, for which the standard mesh $\mathcal{G}_{\Delta x}$  is formed by the blue triangular elements and the mesh $\mathcal{G}_h$  is formed by the black rectangular elements. Note that the latter overlap at the corners. The asterisks in red are the nodes of $\mathcal{G}_h$, according to the standard $\mathbb{Q}_2$ element. The values of the numerical solution on the nodes $v_i$ are obtained by interpolation at internal nodes, and by the Dirichlet boundary condition if the nodes lie on the boundary $\partial \Omega$.


\begin{figure}
	\centering
	\includegraphics[width=1\textwidth]{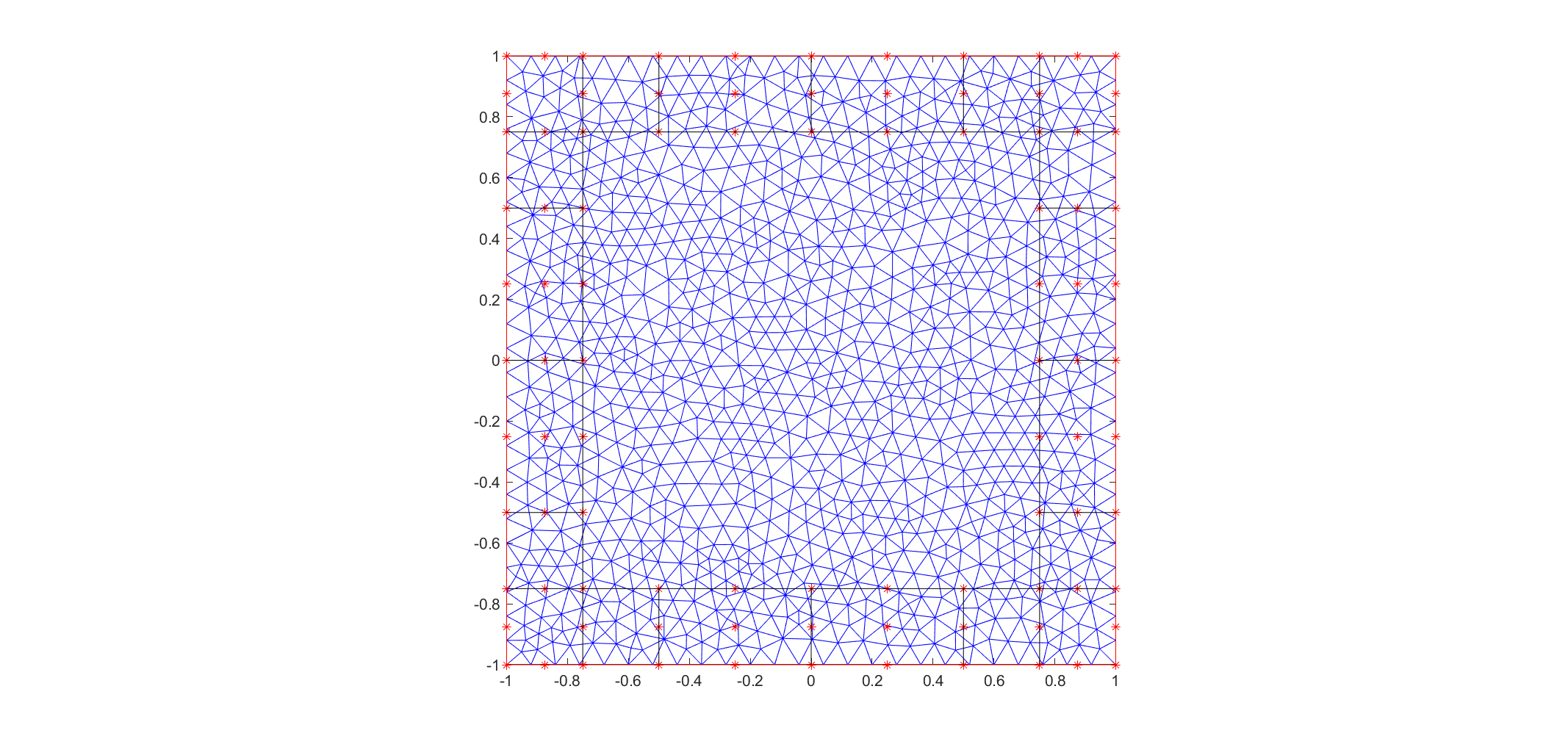}
	\caption{Unstructured computational mesh (blue triangular elements) together with boundary mesh $\mathcal{G}_{h}$ (black  rectangular elements and red asterisks nodes) }
	\label{fig:boundarymesh}
\end{figure}
Let us call $\mathcal{T}_{\Delta x}$  a given triangulation, with ${\mathcal{G}}_{\Delta x}$   the set of the  vertexes of the elements $K\in\mathcal{T}_{\Delta x}$. Let us define the polygonal domain  $\Omega_{\Delta x}:=\cup _{K \in \tau_{\Delta x}}K\subset \Omega $.
If, for some $i$ and $k$, $z^{n+1}_{k,i}\notin \overline \Omega_{\Delta x}$, then its projection $P(z^{n+1}_{k,i})$ 
onto $\overline \Omega_{\Delta x}$ is computed, defined as the point in $\overline \Omega _{\Delta x}$ at minimum distance
from $z^{n+1}_{k,i}.$  The value of the numerical solution $c^n(z^{n+1}_{k,i})$ is then approximated by a quadratic extrapolation operator $\Psi_2$. This operator is constructed  via the $\mathbb{Q}_2$ associated to the element of $\mathcal{G}_h$ to which the projection $P(z^{n+1}_{k,i})$ belongs:
 $$
 c^n(z^{n+1}_{k,i})\simeq \Psi_2[\hat c^n](z^{n+1}_{k,i})),
 $$
 where $\hat c^n$  corresponds to
 $$
 \hat c^n(v_i)=\begin{cases}I_2[c^n](v_i)&  {\textrm{if}}  \;v_i \in \Omega,\\
 b(v^i,t_{n}) & {\textrm{if}} \;v_i \in \partial \Omega.\\
\end{cases}
 $$
 The method can be extended to more general domains, by considering triangular elements for $\mathcal{G}_h$.
 This technique will not be rigorously analysed here, but we will provide a numerical validation in Section \ref{test:boundarycondition}.
 
  \section{Numerical results}
 \label{tests}
 
 A number of numerical experiments have been carried out, in order to assess the accuracy of the proposed methods on both structured and unstructured meshes. 
In the unstructured case, we have constructed a triangular mesh by the Matlab2019 function {\tt{generateMesh}}, with a maximum mesh edge of $\Delta x$, and used a ${\P}_2$ space reconstruction. In the structured Cartesian case, the bicubic polynomial interpolation implemented in the Matlab2019 command {\tt{interp2}}, has been used.
Since the goal is to evaluate the accuracy of time discretization, both choices avoid to hide the time discretization error with the error introduced by a lower order space reconstruction.

\subsection{Pure diffusion}
\label{purediff}

In a first, basic test, we consider equation \eqref{eq:adr} in the pure diffusion case, i.e., with zero advection and reaction terms, on the square domain $\Omega=(-2,2)\times(-2,2)$, with $T=1$ and $\nu=0.05$.
 Based on the test case proposed in \cite{pudykiewicz:1984},  
 we assume a Gaussian  initial datum centered in $(0,0)$, with $\sigma= 0.1$, so that the exact solution
 in an infinite plane would be
 $$
 c(x,y,t)=\frac{1}{1+2\nu t/\sigma^2}
 \exp\left \{- \frac{x^2+y^2}{2(\sigma^2+2\nu t)}\right \}.
 $$
 For this test case, we only consider structured meshes with constant steps $\Delta x=\Delta y = 4/N $ in both directions.
 Following \cite{bonaventura:2014}, we consider different time step values $\Delta t$, which correspond  to
 different values of the parabolic stability parameter $\mu= \Delta t\nu/\Delta x^2$.
We compare method SL1 \eqref{first_order} and method SL2 \eqref{second_order_CN}, and collect the results in Table \ref{table2}. It can be observed that the expected convergence rates are recovered. Furthermore, it is apparent that scheme SL2
yields a substantial accuracy improvement, without an excessive increase in computational cost.
Indeed, the SL2 runs require between 30\% and 60\% more CPU time, depending on the resolution, while leading
to corresponding error reductions between 140\% and 730\%. As a comparison, a standard second order discretization
in space coupled to an explicit second order method in time yields at the finest resolution an error 5 times larger than that
of method SL2 at approximately the same computational cost.

 \begin{table}
  \centering
\begin{tabular}{||c|c|c|c|c|c||} 
\hline
\multicolumn{2}{||c|}{Resolution} & \multicolumn{2}{c}{Relative error}  & \multicolumn{2}{|c||}{Convergence rates} \\
\hline
$\Delta x$ &   $\mu $ & $l_2$ &  $l_{\infty}$ & $p_2$ &  $p_{\infty}$   \\
\hline 
$0.08$ &    $0.84$ & $3.34\cdot 10^{-2}$& $5.10\cdot 10^{-2}$  & - & -  \\
\hline
$0.04$ & $1.6$ & $1.33\cdot 10^{-2}$ & $2.05\cdot 10^{-2}$  &  1.33 &  1.00  \\
\hline
$0.02$ &    $3.2$ & $6.57\cdot 10^{-3}$ & $1.03\cdot 10^{-2}$   & 1.02 &  0.99 \\    
\hline
\end{tabular}

\bigskip

\begin{tabular}{||c|c|c|c|c|c||} 
\hline
\multicolumn{2}{||c|}{Resolution} & \multicolumn{2}{c}{Relative error}  & \multicolumn{2}{|c||}{Convergence rates} \\
\hline
$\Delta x$ &   $\mu $ & $l_2$ &  $l_{\infty}$ & $p_2$ &  $p_{\infty}$   \\
\hline 
$0.08$ &    $0.84$ & $2.66\cdot 10^{-3}$ & $4.76\cdot 10^{-3}$  & - & -  \\
\hline
$0.04$ & $1.6$ & $4.89\cdot 10^{-4}$ & $8.24\cdot 10^{-4}$  & 2.44 &   2.53 \\
\hline
$0.02$ &    $3.2$ & $8.89\cdot 10^{-5}$ & $1.48\cdot 10^{-4}$  & 2.46  & 2.48 \\    
\hline
\end{tabular}
\caption{Errors for the pure diffusion test, first order method SL1 (upper) and second order method SL2 (lower) on a structured mesh.}
\label{table2} 
 \end{table}
 \subsection{Solid  body rotation}
 \label{solidbody}
 Next, we consider the advection--diffusion equation \eqref{eq:adr}  with 
 coefficients $u=(-\omega y,\omega x), $  $\omega=2\pi, $  $\nu=0.05 $  and $f=0 $
 on the square domain $\Omega=(-2,2)\times(-2,2)$ and $T=1$.
 Following \cite{pudykiewicz:1984}, we assume a Gaussian  initial datum centered at $(x_0,y_0)=(1,0)$ with $\sigma= 0.05$, so that the exact solution in an infinite plane would be
 \begin{equation}
 c(x,y,t)=\frac{1}{1+2\nu t/\sigma^2}\exp\left \{-\frac{(x-x(t))^2+(y-y(t))^2}{2(\sigma^2+2\nu t)}\right \},
 \end{equation}
where $x(t)=x_0\cos{\omega t}-y_0\sin{\omega t}, $ $y(t)=x_0\sin{\omega t}-y_0\cos{\omega t}.$ 
We first consider  structured meshes with constant steps $\Delta x=\Delta y = 4/N $ in both coordinate directions.
We consider again values of $\Delta t$ corresponding to
 different values of the parabolic stability parameter $\mu$, as well as of the Courant number $\lambda=\Delta t\max{|{u}|}/\Delta x$.  
 
In the structured case, we compare method SL1 \eqref{first_order} with Euler substepping as in \eqref{substep_1st}, the decoupled variant SL2s of method \eqref{second_order_CN} with Heun substepping, and method SL2 \eqref{second_order_CN} with the
fully coupling \eqref{cnstream_disp}.   
The results are reported in Table \ref{tab:test2SL2}, in which convergence rates are computed with respect to the
values in the first row. Furthermore, the convergence rate estimation for the values
in the last row takes into account that the time step has been reduced by a factor 4.

It can be observed that the
expected convergence rates with respect to the time discretization error
are recovered, in the constant $\Delta x,$ constant $C$ or constant $\mu$
convergence studies. It can also be observed that the decoupled variant SL2s, in spite of the loss of second order convergence,
does indeed improve the results with respect to the SL1 method and is competitive with the full second order method
SL2. As a comparison, a standard centered finite difference, second order discretization
in space coupled to an explicit second order method in time yields at the finest resolution an error analogous to  that
of method SL2 but requires approximately three times its CPU time.

In the unstructured case, the quadratic polynomial interpolation naturally associated to $\mathbb{P}_2$ finite
elements was employed and only the SL2s and SL2 methods were considered. The triangular mesh used was chosen with maximum      triangle size $\Delta x$ approximately equal to the corresponding structured meshes.
      The results are reported in Table \ref{tab:test2SL2_unstr}.  While the behaviour of the SL2 scheme is entirely analogous to that of the structured mesh case, the SL2s method shows in this case  little error reduction  
     when the spatial resolution is kept fixed.
  \begin{table}
  \centering
\begin{tabular}{||c|c|c|c|c|c|c||} 
\hline
\multicolumn{3}{||c|}{Resolution} & \multicolumn{2}{c}{Relative error}  & \multicolumn{2}{|c||}{Convergence rates} \\
\hline
$\Delta x$ &  $\lambda$ & $\mu $ & $l_2$ &  $l_{\infty}$ & $p_2$ &  $p_{\infty}$   \\
\hline
$0.04$ &   $16$ & $1.62$ & $0.15$ & $0.16$  & - &  - \\
\hline
$0.04$ &   $8$ & $0.82$ & $7.71\cdot 10^{-2}$ & $8.13\cdot 10^{-2}$  & 0.96  &   0.98  \\
\hline
$0.02$ &   $16$ & $3.2$ & $7.71\cdot 10^{-2}$ & $8.13\cdot 10^{-2}$  & 0.96  &  0.98   \\
\hline
$0.02$ &   $8$ & $1.6$ & $3.92\cdot 10^{-2}$ & $4.13\cdot 10^{-2}$  & 0.97  & 0.97    \\
\hline
\end{tabular}

\bigskip

\begin{tabular}{||c|c|c|c|c|c|c||} 
\hline
\multicolumn{3}{||c|}{Resolution} & \multicolumn{2}{c}{Relative error}  & \multicolumn{2}{|c|}{Convergence rates} \\
\hline
$\Delta x$ &  $\lambda$ & $\mu $ & $l_2$ &  $l_{\infty}$ & $p_2$ &  $p_{\infty}$   \\
\hline
$0.04$ &   $16$ & $1.62$ & $7.65\cdot 10^{-2}$ & $7.95\cdot 10^{-2}$  & - &   -  \\
\hline
$0.04$ &   $8$ & $0.82$ & $3.89\cdot 10^{-2}$ & $4.02\cdot 10^{-2}$  & 0.98  &   0.98  \\
\hline
$0.02$ &   $16$ & $3.2$ & $3.89\cdot 10^{-2}$ & $4.02\cdot 10^{-2}$  & 0.98 &  0.98  \\
\hline
$0.02$ &   $8$ & $1.6$ & $1.96\cdot 10^{-2}$ & $2.02\cdot 10^{-2}$  & 0.98  & 0.99    \\
\hline
\end{tabular}

\bigskip

\begin{tabular}{||c|c|c|c|c|c|c||} 
\hline
\multicolumn{3}{||c|}{Resolution} & \multicolumn{2}{c}{Relative error}  & \multicolumn{2}{|c||}{Convergence rates} \\
\hline
$\Delta x$ &  $\lambda$ & $\mu $ & $l_2$ &  $l_{\infty}$ & $p_2$ &  $p_{\infty}$   \\
\hline
$0.04$ &   $16$ & $1.62$ & $0.11$ & $0.11$  & - & -   \\
\hline
$0.04$ &   $8$ & $0.82$ & $2.88\cdot 10^{-2}$ & $2.66\cdot 10^{-2}$  & 1.93  & 2.05    \\
\hline
$0.02$ &   $16$ & $3.2$ & $2.89\cdot 10^{-2}$ & $2.67\cdot 10^{-2}$  & 1.93 &  2.04  \\
\hline
$0.02$ &   $8$ & $1.6$ & $7.35\cdot 10^{-3}$ & $6.64\cdot 10^{-3}$  & 1.95  &   2.03  \\
\hline 
 \end{tabular}
\caption{Errors for the solid body rotation test, methods SL1 (upper), SL2s (middle) and SL2 (lower) on a structured mesh.}
\label{tab:test2SL2} 
 \end{table} 
 
 \begin{table}
 \centering
\begin{tabular}{||c|c|c|c|c|c|c||} 
\hline
\multicolumn{3}{||c|}{Resolution} & \multicolumn{2}{c}{Relative error}  & \multicolumn{2}{|c||}{Convergence rates} \\
\hline
$\Delta x$ & $\lambda$ &  $\mu $ & $l_2$ &  $l_{\infty}$ & $p_2$ &  $p_{\infty}$   \\
\hline
$0.04$ &  $16$ & $1.62$ & $2.39\cdot 10^{-2}$ & $2.25\cdot 10^{-2}$  & - & -  \\
\hline
$0.04$ &  $8$  & $0.82$ &  $2.72\cdot 10^{-2}$ & $2.84\cdot 10^{-2}$  &  0.19  &  0.34 \\
\hline 
$0.02$ &  $16$  & $3.2$ & $7.20\cdot 10^{-3}$ & $6.32\cdot 10^{-3}$  &  1.73 &  1.83  \\
\hline
$0.02$ &  $8$  & $1.6$ & $2.48\cdot 10^{-3}$ & $2.59\cdot 10^{-3}$  & 3.46  &  3.45 \\
\hline 
 \end{tabular}
 
 \bigskip
 
\begin{tabular}{||c|c|c|c|c|c|c||} 
\hline
\multicolumn{3}{||c|}{Resolution} & \multicolumn{2}{c}{Relative error}  & \multicolumn{2}{|c||}{Convergence rates} \\
\hline
$\Delta x$ &  $\lambda$ & $\mu$ & $l_2$ &  $l_{\infty}$ & $p_2$ &  $p_{\infty}$   \\
\hline
$0.04$ &  $16$ & $1.62$ & $0.129$ & $0.139$  & - & -  \\
\hline
$0.04$ &  $8$ &  $0.82$ & $4.02\cdot 10^{-2}$ & $4.42\cdot 10^{-2}$  & 1.68  &  1.65 \\
\hline 
$0.02$ &  $16$ & $3.2$ & $2.88\cdot 10^{-2}$ & $2.56\cdot 10^{-2}$  & 2.16 & 2.44   \\
\hline
$0.02$ &  $8$ & $1.6$ &  $7.70\cdot 10^{-3}$ & $8.08\cdot 10^{-3}$  & 2.38  & 2.45 \\
\hline 
 \end{tabular}
\caption{Errors for the solid body rotation test, methods SL2s (upper) and SL2 (lower) on an unstructured mesh.}
\label{tab:test2SL2_unstr}
 \end{table}  
%
%
   \subsection{Reaction--diffusion  equations}
 \label{dreq}

 Following \cite{feng:2013}, we consider the Allen--Cahn equation
 $$
 c_t     = \nu \Delta c  -c^3+c
 $$
 on the domain $\Omega=(0, 1)\times (0,1)$, with periodic boundary conditions and
 for $t\in[0,2]$.
 As in \cite{feng:2013}, we take the initial datum
$$
c_0(x,y) = \sin \left( 2 \pi x \right) \sin \left(2 \pi y \right)
$$
and a reference solution is computed by a pseudo-spectral Fourier discretization in space, see e.g. \cite{canuto:2006}, and 
a fourth order Runge--Kutta scheme in time with a very large number of time steps. The results are reported in Table \ref{tab:Test4_1_unstr_b}, for the values $\nu=0.01$ and $\nu=0.05$ of the diffusion parameter, respectively. 
In this case, only unstructured meshes were considered and the reference solution was interpolated onto the unstructured
mesh nodes using a higher order interpolation procedure.
Both tests show a quadratic order of convergence.
 \begin{table}
  \centering
\begin{tabular}{||c|c|c|c|c|c|c||} 
\hline
\multicolumn{3}{||c|}{Resolution} & \multicolumn{2}{c}{Relative error}  & \multicolumn{2}{|c||}{Convergence rates} \\
\hline
$\Delta x$ &  $\Delta t$ & $\mu$ & $l_2$ &  $l_{\infty}$ & $p_2$ &  $p_{\infty}$   \\
\hline
$0.04$ &   $0.1$ & $0.62$ & $1.10\cdot 10^{-3}$ & $1.31\cdot 10^{-3}$  & -  &  -  \\
\hline 
$0.02$ &   $0.05$ & $1.25$ & $2.72\cdot 10^{-4}$ & $2.98\cdot 10^{-4}$  & 2.02 & 2.14   \\
\hline
$0.01$ &   $0.025$ & $2.5$ & $6.53\cdot 10^{-5}$ & $7.06\cdot 10^{-5}$  & 2.06  & 2.08 \\
\hline 
 \end{tabular}

\bigskip

\begin{tabular}{||c|c|c|c|c|c|c||} 
\hline
\multicolumn{3}{||c|}{Resolution} & \multicolumn{2}{c}{Relative error}  & \multicolumn{2}{|c||}{Convergence rates} \\
\hline
$\Delta x$ &  $\Delta t$ & $\mu$ & $l_2$ &  $l_{\infty}$ & $p_2$ &  $p_{\infty}$   \\
\hline
$0.04$ &   $0.1$ & $0.62$ & $2.82\cdot 10^{-2}$ & $4.01\cdot 10^{-2}$  &  -  &  - \\
\hline 
$0.02$ &   $0.05$ & $1.25$ & $7.13\cdot 10^{-3}$ & $8.47\cdot 10^{-3}$  &  1.98 &  2.24  \\
\hline
$0.01$ &   $0.025$ & $2.5$ & $1.97\cdot 10^{-3}$ & $2.20\cdot 10^{-3}$  & 1.86  &  1.94 \\
\hline 
 \end{tabular}
\caption{Error for the Allen--Cahn test with $\nu = 0.01$ (upper) and $\nu = 0.05$ (lower), second order method SL2 on an unstructured mesh.}
\label{tab:Test4_1_unstr_b}
  \end{table}

   \subsection{Advection--diffusion--reaction systems}
 \label{adrsys} 
 We consider in this case a set of four coupled advection--diffusion--reaction equations of the form \eqref{eq:adrsys}
\begin{equation}
\label{adr_exe4}
\frac{\partial c_k}{\partial t}+ u\cdot \nabla c_k - \nu \Delta c_k   = f_k(c_1,\dots,c_4)  \ \ \ k=1,\dots,4
\end{equation}
on the square domain $\Omega=(-5,5)\times(-5,5)$ and on the time interval $t\in [0,5]$.
The advection  field is given by
 coefficients $u=(-\omega y,\omega x), $  $\omega=2\pi/10, $ while the diffusion coefficient is set as $\nu=0.01. $   
 The reaction terms are given by
 \begin{eqnarray}
  f_1&=& (c_1-c_1c_2)-(c_1-c_3)/5 \nonumber \\
   f_2&=& -2(c_2-c_1c_2)-(c_2-c_4)/5  \nonumber \\
    f_3&=& 2(c_3-c_3c_4) \nonumber \\
     f_4&=& -4(c_4-c_3c_4), \nonumber 
 \end{eqnarray}
  which represent two coupled Lotka--Volterra prey-predator systems. As initial datum for $c_1,c_3,$
  the function
  $$
c_0(x,y) = \left \{
\begin{array}{c}
\cos{(2\pi [ (x+2.5)^2+y^2 )] }  \ \ \  {\rm for } \ (x+2.5)^2+y^2\leq \frac 14\\
$ \ $ \\ 
0 \ \ \ \ \ \ \ \  \ \ \ \ \ \ \ \ \ \ \ \ \ \ \ \ \ \ \ \ \ \ \ \ \ {\rm for } \ (x+2.5)^2+y^2> \frac 14
\end{array}
  \right .
$$
was considered, while the initial datum for $c_2,c_4,$ was taken to be equal to $3c_0.$
In this test, only a structured mesh was considered
with constant steps $\Delta x=\Delta y = 1/20.$
A reference solution is computed by a pseudo-spectral Fourier discretization in space and 
a fourth order Runge--Kutta scheme in time, using a very large number of time steps.
The reference solution is reported for two sample components in Figure \ref{fig:refsol_adr}, while
the  absolute error distributions  obtained for the same components 
with the second order method SL2 \eqref{second_order_CN} using cubic interpolation,
using a timestep corresponding to $\lambda\approx 7 $ and  $\mu\approx 1/2, $   
are shown in Figure \ref{fig:err_adr_sl2}.
As a reference, the errors for a second order finite difference approximation of \eqref{adr_exe4}
using a second order Runge--Kutta scheme in time with a time step 20 times smaller are shown in
Figure \ref{fig:err_adr_fd2}, while the errors obtained using a fourth order finite difference approximation
for the advection term in \eqref{adr_exe4} with a third order Runge--Kutta scheme in time are displayed in
Figure \ref{fig:err_adr_fd4}, again computed with a time step 20 times smaller than that used for the SL2 method.
It can be seen that  the SL2 method allows to achieve errors of the same order of magnitude as those
of the third order Runge--Kutta  in time, while allowing for a much larger time step without solving
large algebraic systems. 

\begin{figure}
	\centering
	\includegraphics[width=.45\textwidth]{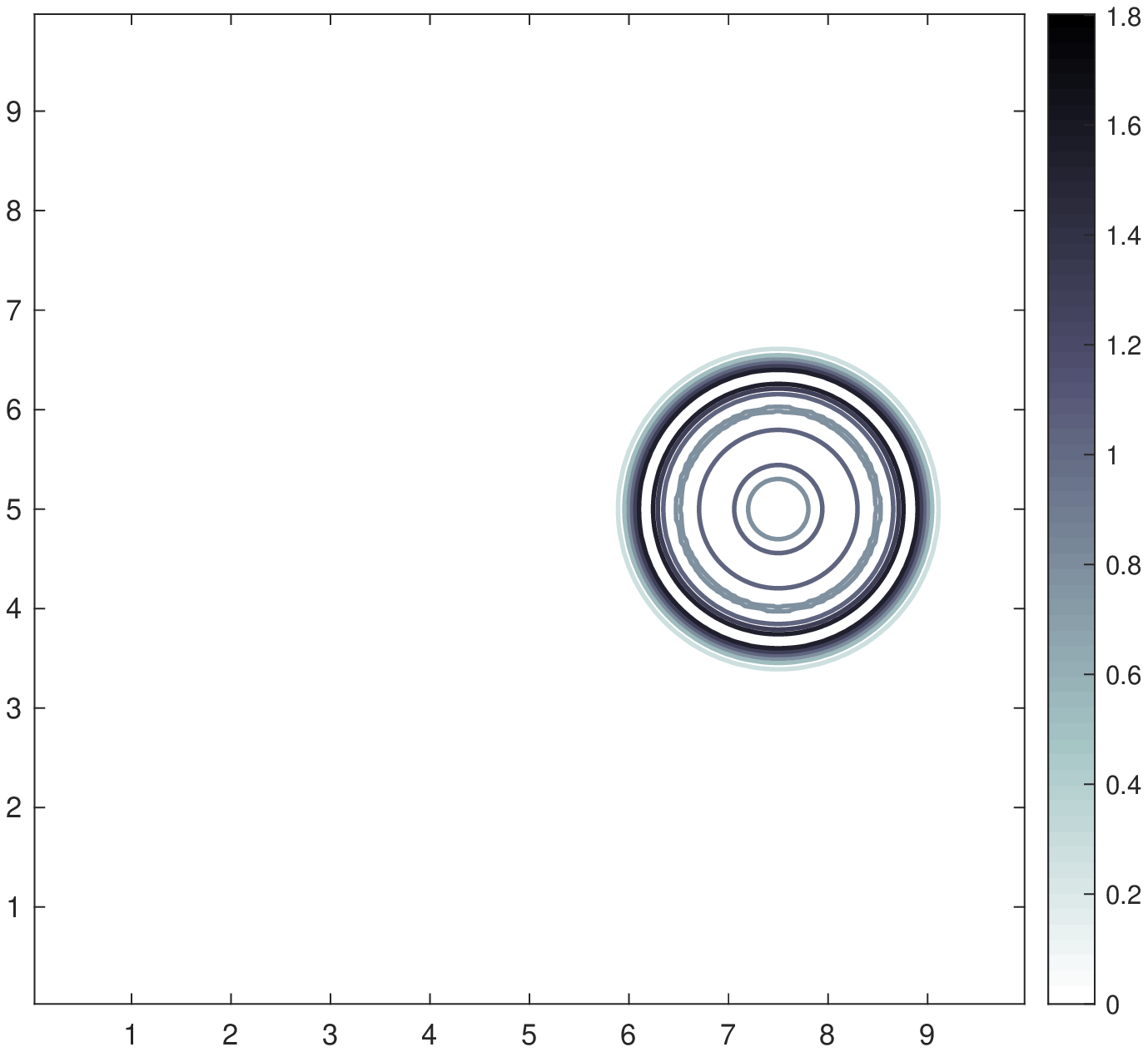} a)
	\includegraphics[width=.45\textwidth]{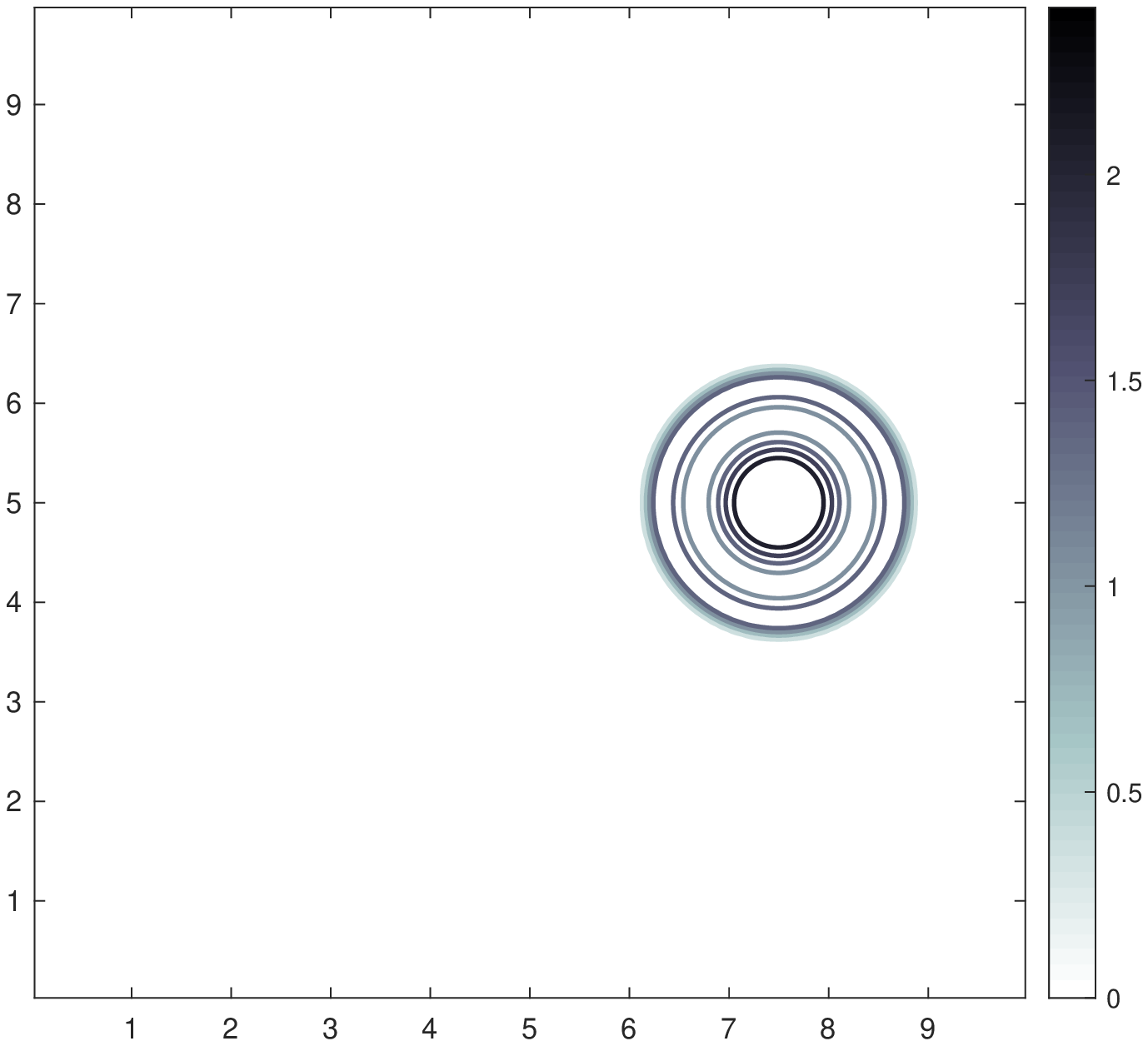} b)
	\caption{Reference solutions for problem \eqref{adr_exe4}, a) component $c_3,$ b) component $c_4$
	at time $T= 5.$}
	\label{fig:refsol_adr}
\end{figure}

\begin{figure}
	\centering
	\includegraphics[width=.45\textwidth]{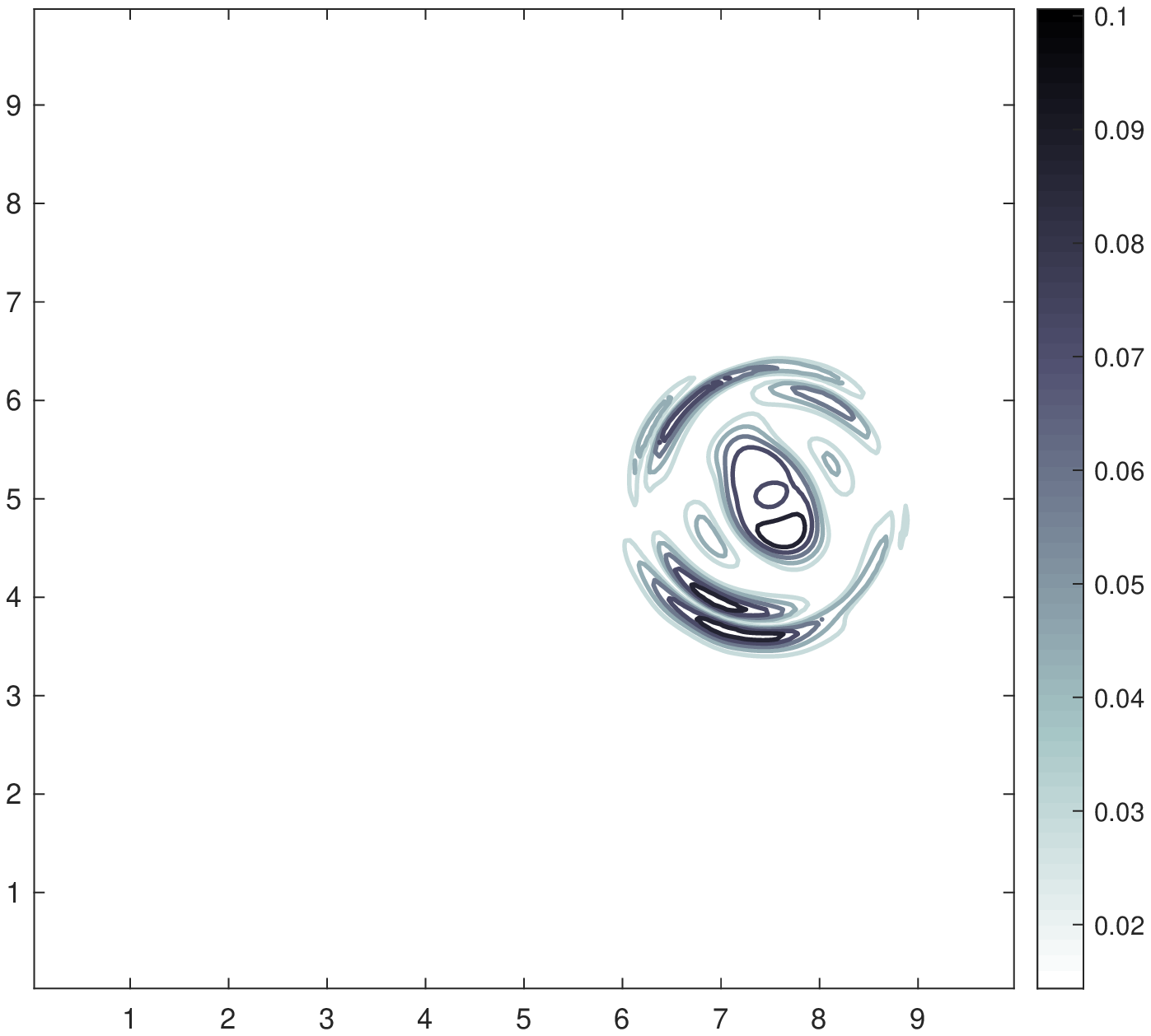} a)
	\includegraphics[width=.45\textwidth]{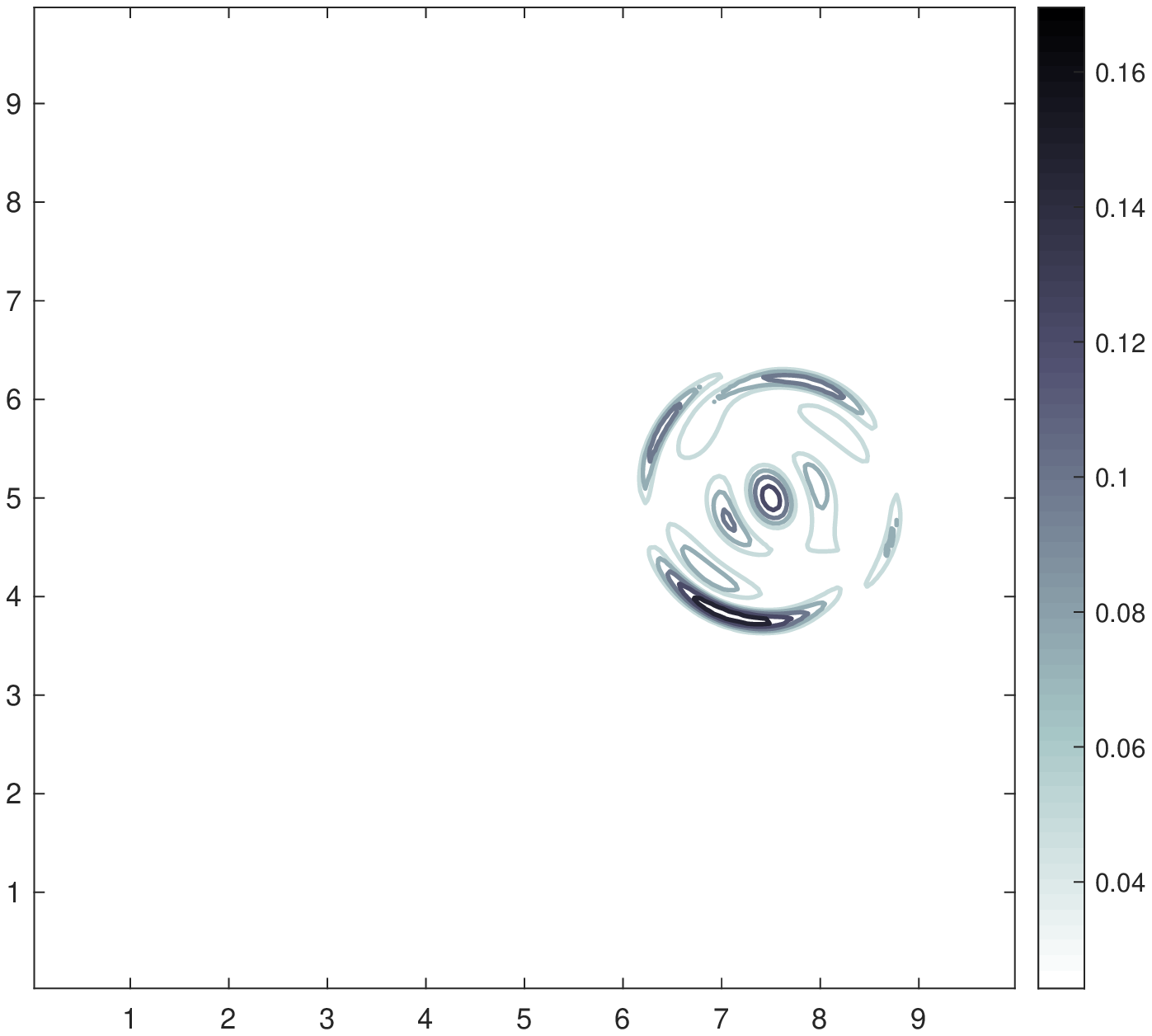} b)
	\caption{Absolute errors of second order SL2 method for problem \eqref{adr_exe4}, a) component $c_3,$ b) component $c_4$
	at time $T= 5.$}
	\label{fig:err_adr_sl2}
\end{figure}

\begin{figure}
	\centering
	\includegraphics[width=.45\textwidth]{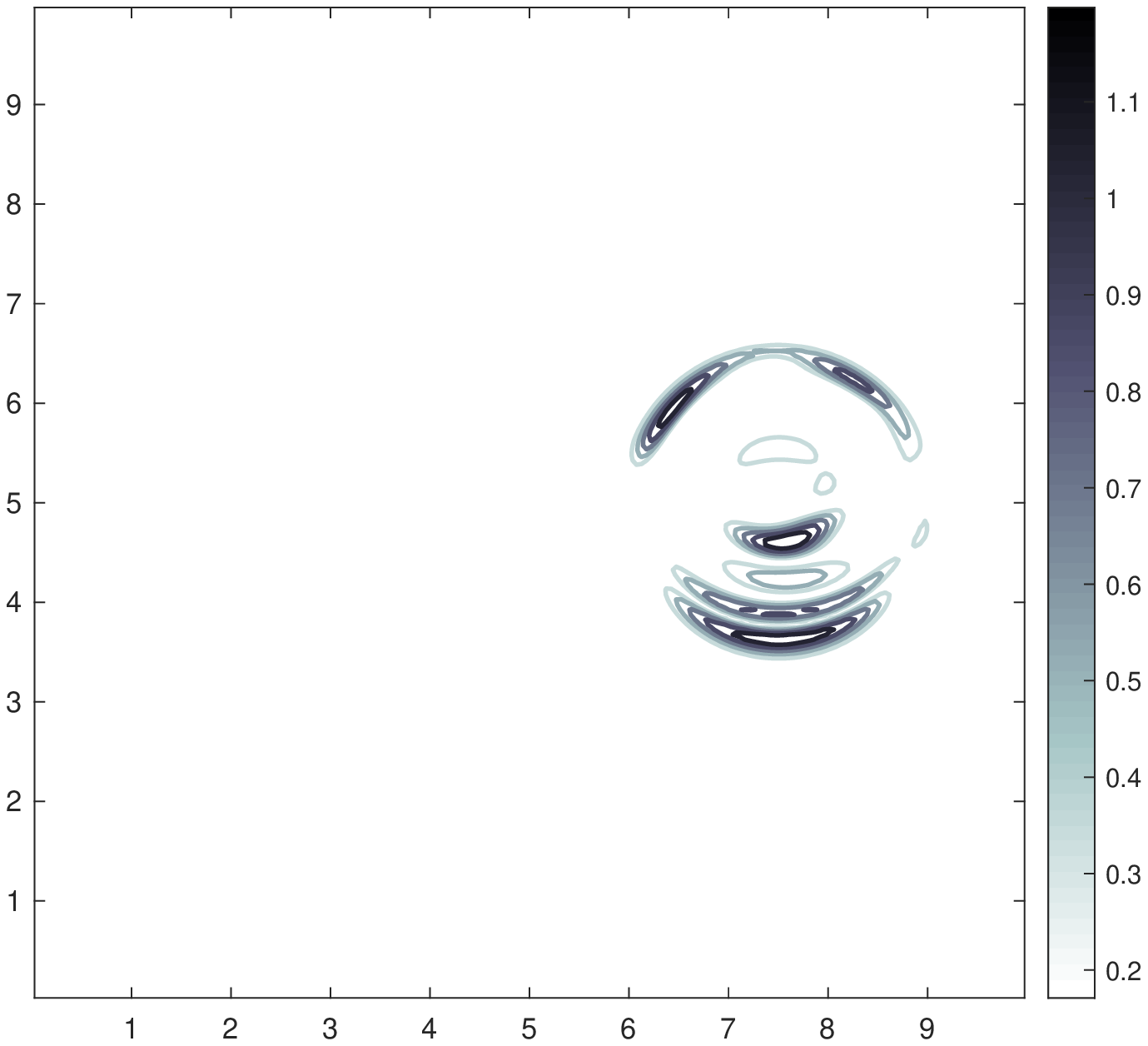} a)
	\includegraphics[width=.45\textwidth]{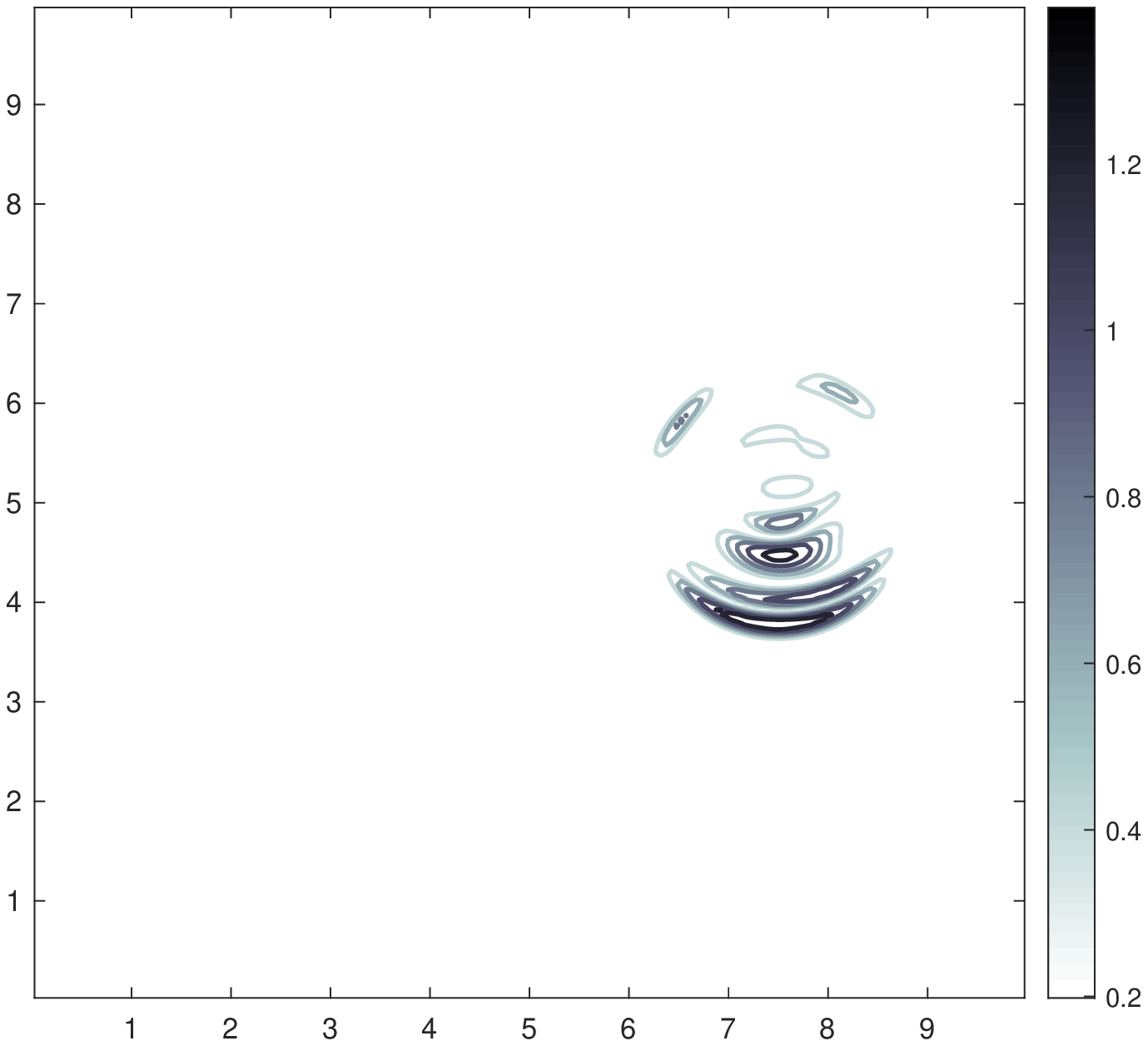} b)
	\caption{Absolute errors of second order finite difference method for problem \eqref{adr_exe4}, a) component $c_3,$ b) component $c_4$
	at time $T= 5.$}
	\label{fig:err_adr_fd2}
\end{figure}

\begin{figure}
	\centering
	\includegraphics[width=.45\textwidth]{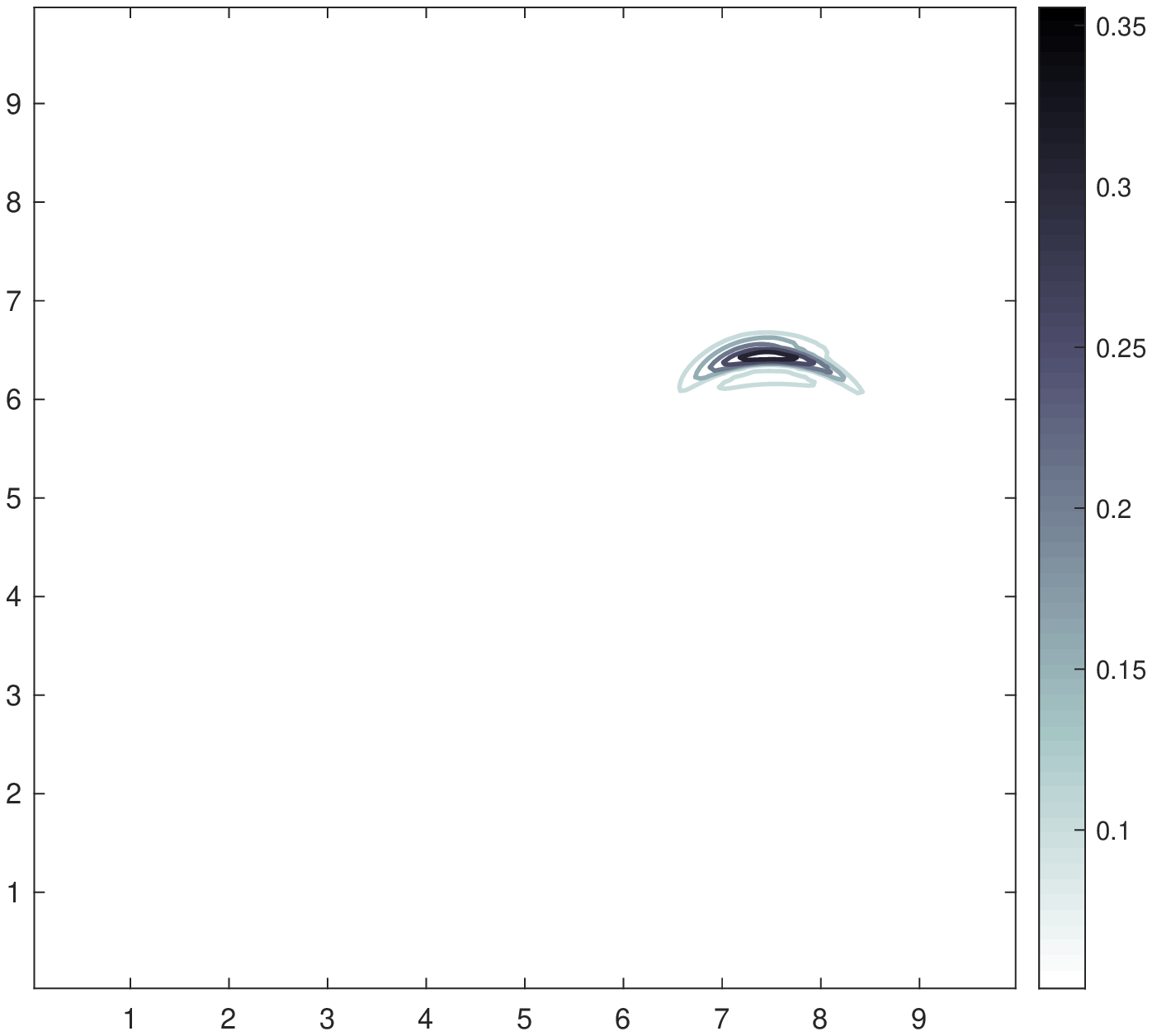} a)
	\includegraphics[width=.45\textwidth]{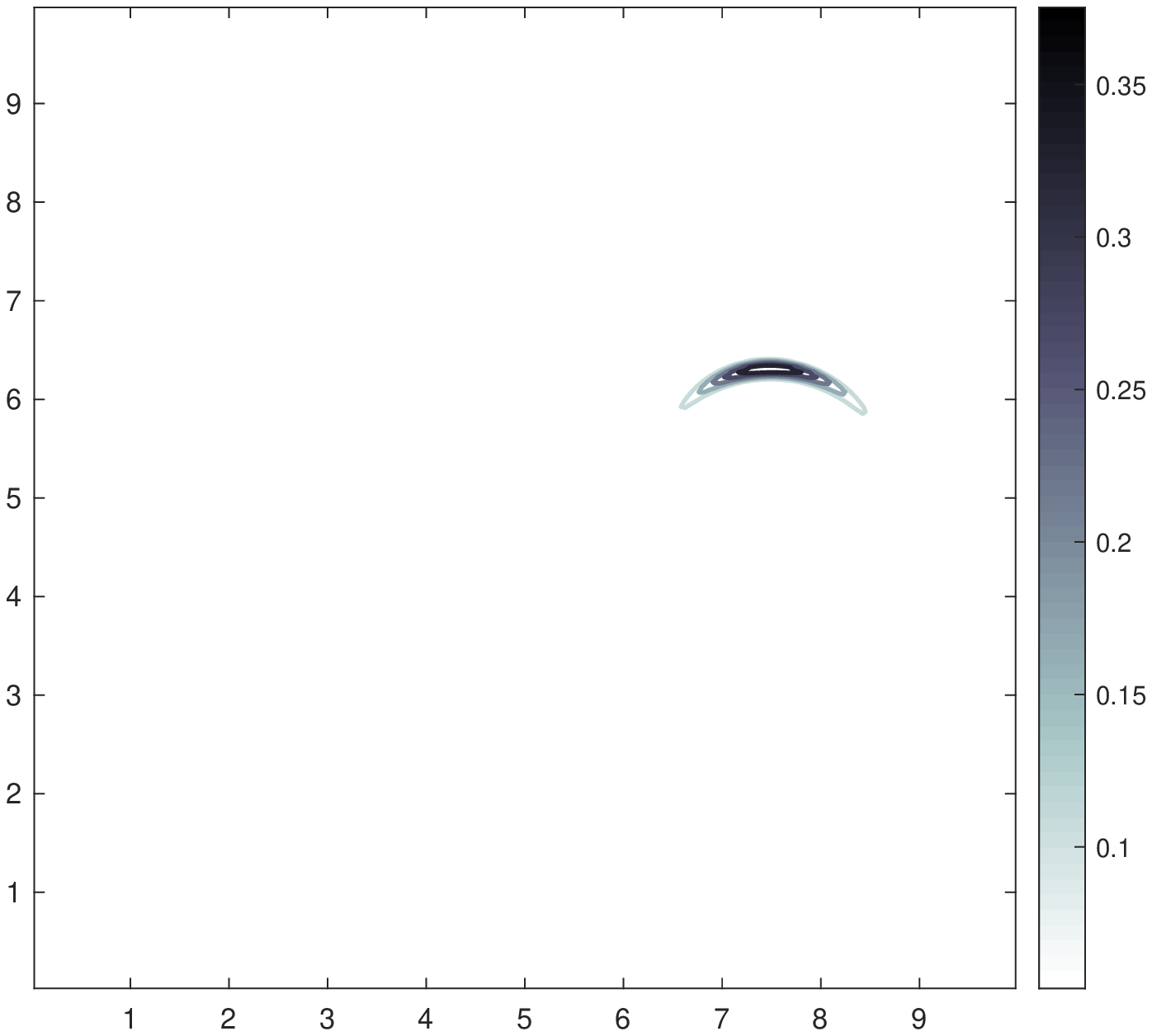} b)
	\caption{Absolute errors of fourth order finite difference method for problem \eqref{adr_exe4}, a) component $c_3,$ b) component $c_4$
	at time $T= 5.$}
	\label{fig:err_adr_fd4}
\end{figure}
%

   \subsection{Advection--diffusion equation, nonhomogeneous boundary conditions}
   \label{test:boundarycondition}
   
In this last set of numerical experiments, we consider nonhomogeneous, possibly time-dependent Dirichlet boundary conditions in four cases: pure diffusion, constant advection--diffusion, solid body rotation with diffusion and advection--diffusion on a nonconvex domain. In all these tests, we have used an unstructured mesh.
In the first three cases, we consider $\Omega=(-1, 1) \times (-1, 1)$, final time $T=1$ and an initial condition in the form of a Gaussian centered at $(x_0,y_0)=(0.5,0)$, with $\sigma= 0.1$. 
In Fig.\ref{fig:boundarymesh}, we show the space meshes $\mathcal{G}_{\Delta x }$ and 
$\mathcal{G}_{h}$ corresponding to the steps $\Delta x=0.04$, $h=0.5, $ which were used to compute the results in the first two rows of Tables \ref{table:diff_dec}-\ref{table:sb1}. In order to have a reference solution to compare with, we compute the exact solution on the whole of $\R^2$ and enforce its values at the boundary as boundary conditions, so that $b(x,y,t)=c(x,y,t)$ for $(x,y)\in\partial\Omega$, $t\in[0,T]$. For all the three cases, we have set $\nu = 0.05$ and $T=1$. In the second and third test, the advection field has been chosen as  $u = (1,0), $ and $u=(-2\pi y,2\pi x), $ respectively.
Tables \ref{table:diff_dec}-\ref{table:sb1} report the numerical errors obtained by the SL2 scheme in these tests, showing in all cases at least a quadratic convergence.

\begin{table}[t!]
  \centering
\begin{tabular}{||c|c|c|c|c|c|c||} 
\hline
\multicolumn{3}{||c|}{Resolution} & \multicolumn{2}{c}{Relative error}  & \multicolumn{2}{|c||}{Convergence rates} \\
\hline
$\Delta x$ &  $\mu$ & $h$ & $l_2$ &  $l_{\infty}$ & $p_2$ &  $p_{\infty}$   \\
\hline
$0.04$ &   $1.56$ & $0.5$ & $4.70\cdot 10^{-3}$ & $1.39\cdot 10^{-2}$  & - & -  \\
\hline
$0.04$ &   $0.78$ & $0.5$ & $3.18\cdot 10^{-3}$ & $1.06\cdot 10^{-2}$  & -  &  -  \\
\hline 
$0.02$ &   $3.12$ & $0.33$ & $3.71\cdot 10^{-4}$ & $1.01\cdot 10^{-3}$  & 3.66 &  3.78  \\
\hline
$0.02$ &   $1.56$ & $0.33$ & $4.35\cdot 10^{-4}$ & $9.57\cdot 10^{-4}$  &  2.87 & 3.47 \\
\hline 
 \end{tabular}
\caption{Errors and convergence rates for the pure diffusion problem with nonhomogeneous Dirichlet conditions, SL2 method, unstructured mesh}
\label{table:diff_dec}
\end{table}
 \begin{table}[t!]
  \centering
\begin{tabular}{||c|c|c|c|c|c|c|c||} 
\hline
\multicolumn{4}{||c|}{Resolution} & \multicolumn{2}{c}{Relative error}  & \multicolumn{2}{|c||}{Convergence rates} \\
\hline
$\Delta x$ &$\lambda$&  $\mu$ & $h$ & $l_2$ &  $l_{\infty}$ & $p_2$ &  $p_{\infty}$   \\
\hline
$0.04$ &$1.25$&   $1.56$ & $0.5$ & $7.35\cdot 10^{-3}$ & $1.18\cdot 10^{-2}$  & - & -  \\
\hline
$0.04$ &$0.625$&   $0.78$ & $0.5$ & $8.35\cdot 10^{-3}$ & $1.32\cdot 10^{-2}$  & -  &  -  \\
\hline 
$0.02$ &$1.25$&   $3.12$ & $0.33$ & $3.76\cdot 10^{-4}$ & $7.59\cdot 10^{-4}$  & 4.29 &  3.96  \\
\hline
$0.02$ &$0.625$ &   $1.56$ & $0.33$ & $2.64\cdot 10^{-4}$ & $5.58\cdot 10^{-4}$  &  4.98 & 4.56 \\
\hline 
 \end{tabular}
\caption{
Errors and convergence rates for the advection--diffusion problem with nonhomogeneous Dirichlet conditions, SL2 method, unstructured mesh.}
\label{table:trasp}
 \end{table}
 \begin{table}[t!]
  \centering
\begin{tabular}{||c|c|c|c|c|c|c|c||} 
\hline
\multicolumn{4}{||c|}{Resolution} & \multicolumn{2}{c}{Relative error}  & \multicolumn{2}{|c||}{Convergence rates} \\
\hline
$\Delta x$ &  $\lambda$ & $\mu$ &$h$ & $l_2$ &  $l_{\infty}$ & $p_2$ &  $p_{\infty}$   \\
\hline
$0.04$ &   $7.85$ & $1.56$ &$0.5$ & $5.62\cdot 10^{-2}$ & $6.09\cdot 10^{-2}$  & - & -  \\
\hline
$0.04$ &   $3.92$ & $0.78$& $0.5$ & $1.49\cdot 10^{-2}$ & $1.60\cdot 10^{-2}$  & -  &  -  \\
\hline 
$0.02$ &   $7.85$ & $3.12$& $0.33$ & $1.49\cdot 10^{-2}$ & $8.98\cdot 10^{-2}$  & 1.91 & -   \\
\hline
$0.02$ &   $3.92$ & $1.56$ & $0.33$ & $3.43\cdot 10^{-3}$ & $3.61\cdot 10^{-3}$  & 2.12  & 2.15 \\
\hline 
 \end{tabular}
\caption{Errors and convergence rates for the solid body rotation problem with nonhomogeneous Dirichlet conditions, SL2 method, unstructured mesh.}
\label{table:sb1}
 \end{table}
 

We finally consider the advection--diffusion equation with $\nu = 0.001$, on the domain $\Omega =([0,1] \times [0, 0.4] )\setminus B_{r_0}\left(x_0, y_0\right)$, where $B_{r_0}\left(x_0, y_0\right)$ denotes a circle with radius $r_0= 0.05$  centered in $\left(x_0, y_0 \right) = \left( 0.1, 0.2 \right)$. The initial datum is $c_0 \left( x, y \right) = 0$
and the boundary condition
\begin{equation*}
b \left( x, y, t \right)= 
\begin{cases}
y \left( 0.4 - y \right) \frac{4}{0.4^2} & \left(x, y \right) \in \left\{ 0 \right\} \times  \left[0, 0.4 \right], t \in \left[0, T \right]\\
1 & \left(x, y \right) \in \partial B_r\left(x_0\right), t \in \left[0, T \right]\\
0  & \text{otherwise.}
\end{cases}
\end{equation*}
The velocity field $u \left(x, y \right)$ is given by
\begin{equation*}
u \left(x, y \right) = \left( u_0 + \frac{u_0 r_0^3}{2 r^3 } - \frac{3 u_0 r_0^3 \left(x - x0 \right)^2}{2 r^5},  -\frac{3 r_0^3 u_0 \left(x - x0 \right) \left(y - y0 \right)}{2 r^5} \right),
\end{equation*}
where we set $u_0 = 0.2$ and $r^2 = \left(x - x_0 \right)^2 + \left(y - y_0 \right)^2$.
In Fig. \ref{fig:mesh_circ_hole}, we show the domain $\Omega$, discretized using a Delaunay mesh $\mathcal G_{\Delta x}$ with $\Delta x=0.1$, refined around the circular hole. In Fig. \ref{fig:T3}, we show the numerical solution computed with SL2 with time step $\Delta t=0.005$ for time $t=0.5, 1, 2, 3$. The nonhomogeneus boundary condition are computed by extrapolation with an extra grid $\mathcal G_h$ with $h=1.5\sqrt{\Delta t}$. In this case, the additional mesh $\mathcal G_h$ has been built around the circular hole, as well as along the external rectangular boundary. Note that, in spite of the discontinuity of the initial configuration and the sharp boundary layer around the hole, the boundary condition is smoothly propagated in the interior of the domain.
\begin{figure}
	\centering
	\includegraphics[width=.8\textwidth]{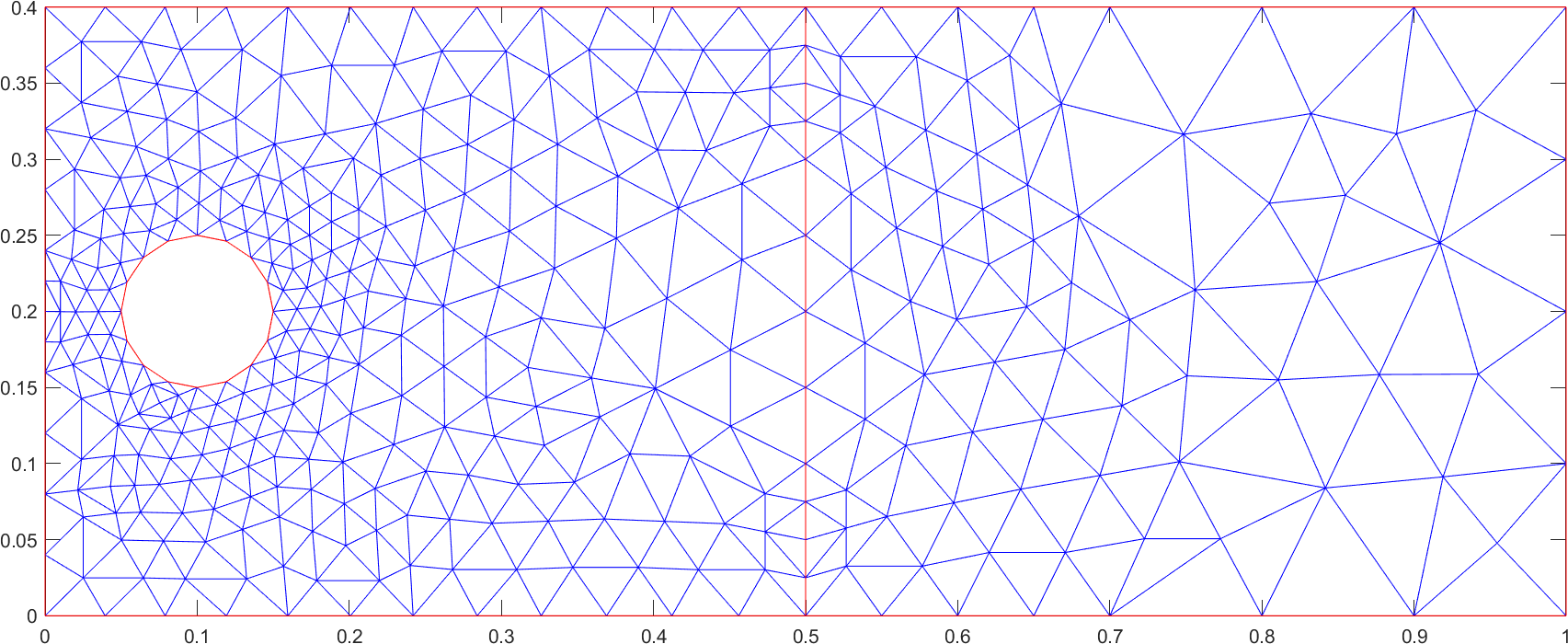}
	\caption{Unstructured mesh for the non-convex problem}
	\label{fig:mesh_circ_hole}
\end{figure}
\begin{figure}
	\includegraphics[width=.9\textwidth]{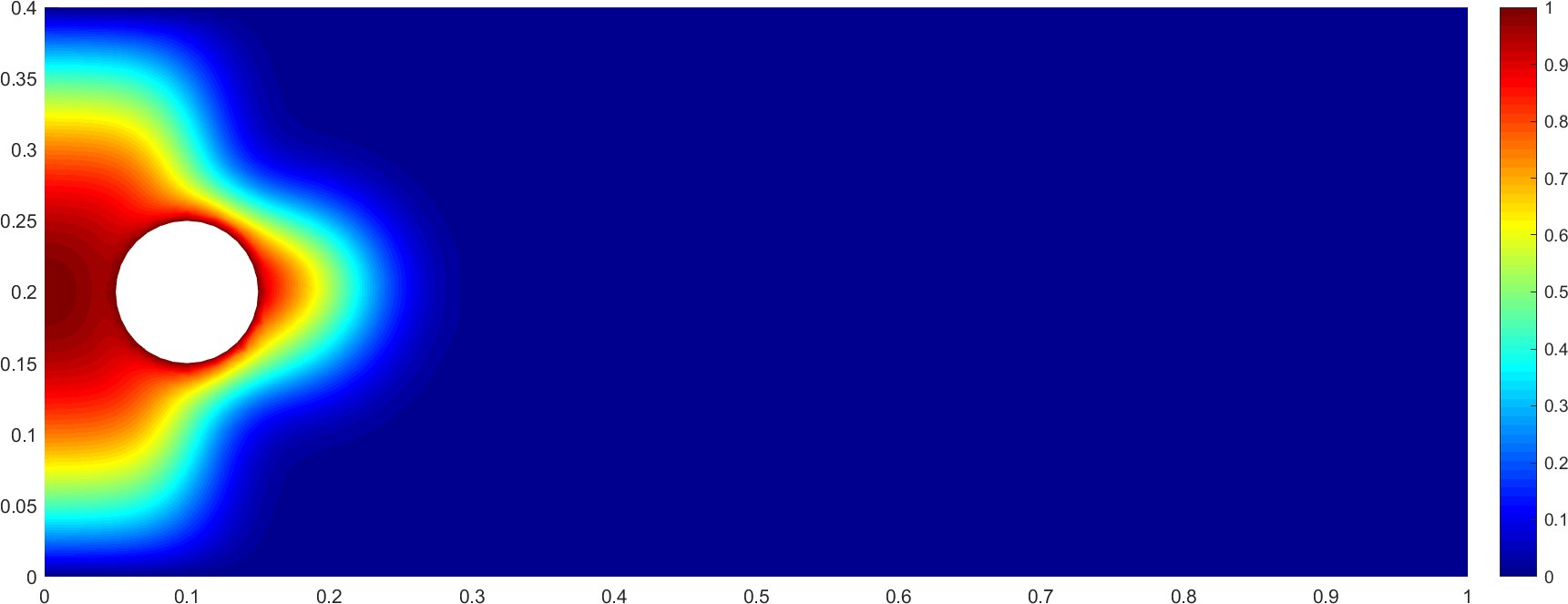}
%
	\includegraphics[width=.9\textwidth]{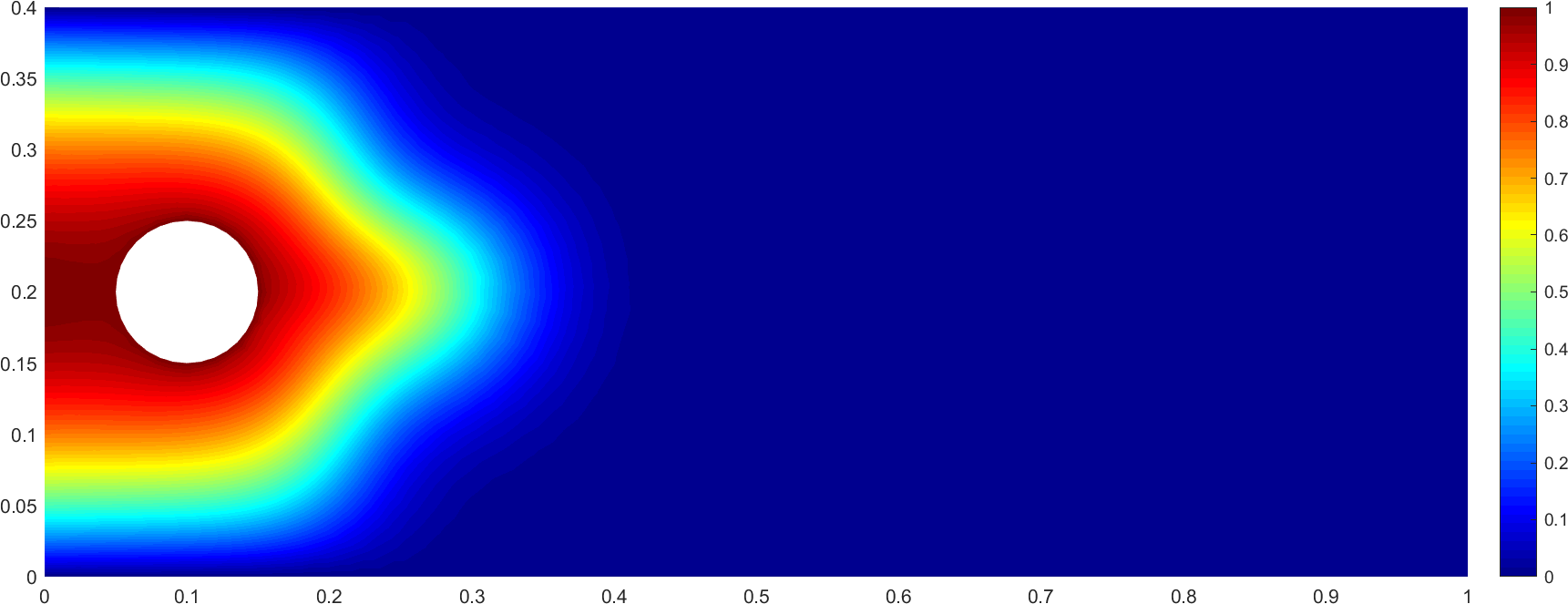}
%
	\includegraphics[width=.9\textwidth]{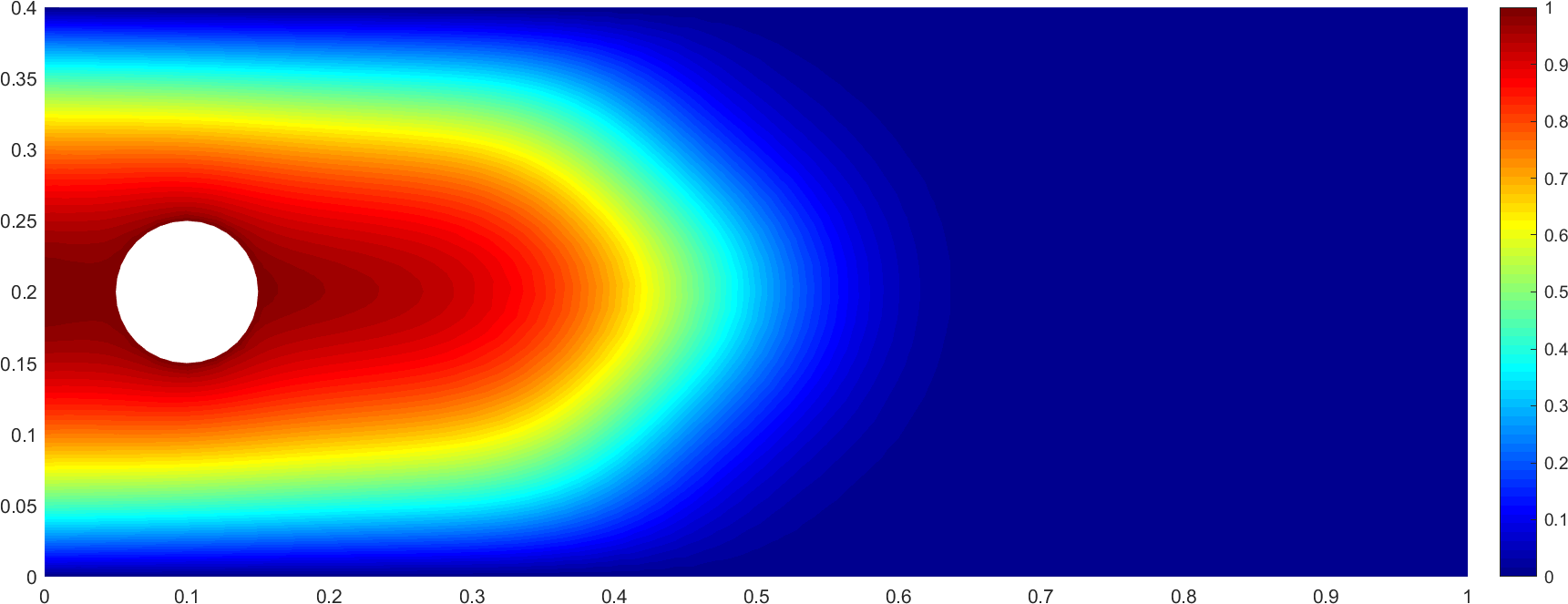}
%
	\includegraphics[width=.9\textwidth]{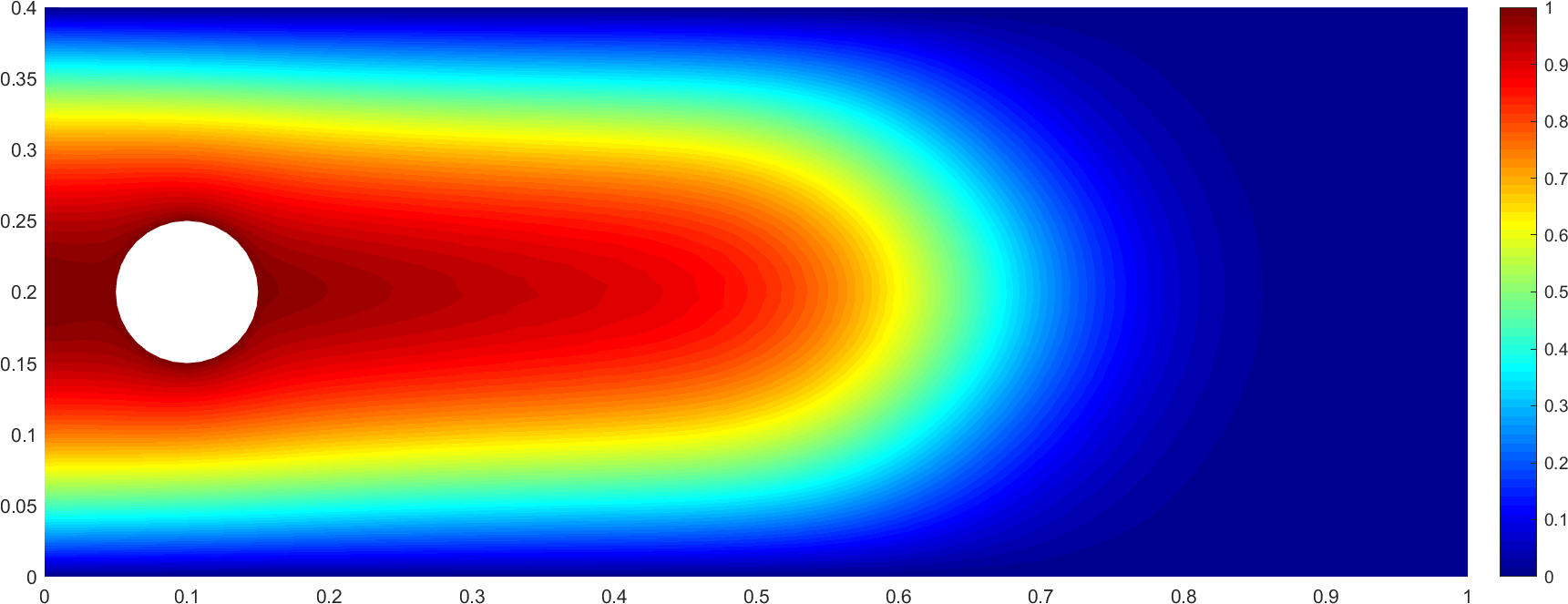}
	\caption{Numerical solution at time $t = 0.5, 1, 2, 3$}
	\label{fig:T3}
\end{figure}

         \section{Conclusions}
 \label{conclu}
 
A family of  fully semi-Lagrangian approaches for the discretization of advection--diffusion--reaction systems has been proposed, which extend the methods outlined in \cite{bonaventura:2014}, \cite{bonaventura:2018} to full second order accuracy.
The stability and convergence
of the basic second order method has been analyzed. 
The proposed methods have been
 validated on a number of classical benchmarks,  on both structured and unstructured meshes.
 Numerical results  show that these methods yield good quantitative agreement with reference
numerical solutions, while being superior in efficiency to standard implicit approaches and to approaches
in which the SL method is only used for the advection term. In future developments, the proposed method
will be extended to higher order discontinuous finite element discretizations along the lines of \cite{tumolo:2015}
and will be applied to the development of second order fully semi-Lagrangian methods for the Navier-Stokes equations
along the lines of  \cite{bonaventura:2020},\cite{bonaventura:2018}. 
\section*{Acknowledgements}
This work has been partly supported by INDAM-GNCS in the framework of the GNCS  2019 project 
\textit{Approssimazione numerica di problemi di natura iperbolica
ed applicazioni}. 
\bibliographystyle{plain}
\bibliography{ns_slag}
\end{document}